\documentclass[11pt]{article}
\usepackage[utf8]{inputenc}
\usepackage{fullpage}
\usepackage{amsmath}
\usepackage{amsthm}
\usepackage{amscd,amssymb}
\usepackage[all]{xy}
\usepackage{color}
\usepackage{comment}
\usepackage{appendix}
\usepackage{verbatim}
\usepackage{pdfsync}
\usepackage{hyperref}
\usepackage{cleveref}
\usepackage{float}
\usepackage{setspace}
\usepackage{xpatch}
\usepackage{subcaption}
\usepackage{pst-node}
\usepackage{tikz-cd}
\usetikzlibrary{positioning}
\usetikzlibrary{shapes,arrows}
\usepackage{tikz}
\usepackage{graphicx,epsfig}
\usepackage{mathrsfs}
\usepackage{verbatim}
\date{}

\newtheorem{theorem}{Theorem}
\newtheorem{lemma}[theorem]{Lemma}
\newtheorem{cor}[theorem]{Corollary}
\newtheorem{proposition}[theorem]{Proposition}
\theoremstyle{definition}
\newtheorem{remark}[]{Remark}
\newtheorem{definition}{Definition}
\newtheorem{claim}[]{Claim}

\newtheorem{example}[]{Example}

\newcommand{\bS}{\mathbb{S}}
\newcommand{\supp}{\mathrm{Supp}}
\def\ind{\mathrm{Ind}}
\def\st{\mathrm{st}}
\def\bd{\mathrm{Bd}}
\def\S{\mathcal{S}}


\title{Distance $r$-domination number and $r$-independence complexes of graphs}
\author{Priyavrat Deshpande\footnote{Chennai Mathematical Institute, Chennai, India. Email: pdeshpande@cmi.ac.in}, Samir Shukla\footnote{Indian Institute of Technology Bombay, Mumbai, India. Email: samirshukla43@gmail.com}, Anurag Singh\footnote{Chennai Mathematical Institute, Chennai, India. Email: anuragsingh@cmi.ac.in}}
\date{}

\begin{document}
	
	\maketitle
	
	\begin{abstract}
		For $r\geq 1$, the $r$-independence complex of a graph $G$, denoted $\ind_r(G)$, is a simplicial complex whose faces are subsets $A \subseteq V(G)$ such that each component of the induced subgraph $G[A]$ has at most $r$ vertices. In this article, we establish a relation between the distance $r$-domination number of $G$ and (homological) connectivity of $\ind_r(G)$. We also prove that $\ind_r(G)$, for a chordal graph $G$, is either contractible or homotopy equivalent to a wedge of spheres. Given a wedge of spheres, we also provide a construction of a chordal graph whose $r$-independence complex has the homotopy type of the given wedge.
	\end{abstract}
	
	\noindent {\bf Keywords} : Independence complex, higher independence complex, distance $r$-domination number, chordal graphs

\noindent 2010 {\it Mathematics Subject Classification}: primary 05C69, secondary 55P15

\vspace{.1in}

\hrule
	
	\section{Introduction}

	The {\it independence complex}, $\ind(G)$, of a graph $G$ is the simplicial complex whose simplices are those subsets $I$ of vertices of $G$ such that the induced subgraph $G[I]$ does not have any edge. Independence complexes have applications in  several areas. Study of topological properties of independence complexes has been an active direction of research. For example,
	Babson and Kozlov \cite{BK1} used the topology of independence complexes of cycles to prove a conjecture by Lov\'{a}sz. Properties of independence complexes have also been used to study the Tverberg graphs \cite{ea} and 
	the independent  system of representatives \cite{abz}.
	For more on these complexes, interested reader  is referred to
	\cite{Bar13, BLN, BB, eh, ea1, kk}. 

	Let $r$ be a positive integer and $G$ be a graph. A set $A\subseteq V(G)$ is called {\itshape  $r$-independent} if each connected component of the induced subgraph $G[A]$ has at most $r$ vertices. 
	In \cite{PS18}, Paolini and Salvetti generalized the concept of independence complex by defining $r$-independence complex of a graph $G$ for any $r \geq 1$. In the same paper, they gave a relation between twisted  homology of classical braid groups and the homology of $r$-independence  complexes of associated Coxeter graphs. 
	The {\itshape $r$-independence complex} of $G$, denoted $\ind_r(G)$, is the simplicial complex whose simplices are $r$-independent sets of vertices of $G$. Observe that $\ind_1(G)$ is same as $\ind(G)$. First and third authors \cite{DS19} initiated the study of these complexes and gave a closed form formula for the homotopy type of $r$-independence complexes for certain families of graphs including complete $s$-partite graphs, fully whiskered graphs, cycle graphs and perfect $m$-ary trees.
	
	Let $\Gamma(G)$ and $\Gamma_0(G)$ denote the domination number and strong domination number of a graph $G$ respectively.  For $k\geq 0$, let $\tilde{H}_k(X)$ denotes the $k^{\text{th}}$ reduced homology of a topological space $X$ with integer coefficients. In \cite{RM}, Meshulam proved the following.
	
	\begin{theorem}\cite[Theorem 1.2]{RM} \label{thm:meshulam}\begin{itemize}
			\item[(i)] If $\Gamma_0(G) > 2k$, then $\tilde{H}_{k-1}(\ind_1(G)) = 0$.
			\item[(ii)] If $G$ is chordal and $\Gamma(G) > k$, then  $\tilde{H}_{k-1}(\ind_1(G)) = 0$.
		\end{itemize}
	\end{theorem}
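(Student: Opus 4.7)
The plan is to prove both parts by strong induction on $|V(G)|$ via the Mayer-Vietoris sequence obtained by splitting $\ind_1(G)$ at a well-chosen vertex $v$. Writing $N[v]$ for the closed neighborhood, one has the decomposition
$$\ind_1(G) = \st(v) \cup \ind_1(G - v), \qquad \st(v) \cap \ind_1(G - v) = \ind_1(G - N[v]),$$
in which $\st(v) = v * \ind_1(G - N[v])$ is contractible, yielding the reduced Mayer-Vietoris sequence
$$\tilde{H}_{k-1}(\ind_1(G - N[v])) \to \tilde{H}_{k-1}(\ind_1(G - v)) \to \tilde{H}_{k-1}(\ind_1(G)) \to \tilde{H}_{k-2}(\ind_1(G - N[v])).$$
The vanishing of $\tilde{H}_{k-1}(\ind_1(G))$ thus reduces to vanishing for the two smaller graphs $G - v$ and $G - N[v]$.

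For part (i) the key combinatorial input is robustness of the strong domination number under vertex operations: one expects $\Gamma_0(G - v) \geq \Gamma_0(G) - 1$ and $\Gamma_0(G - N[v]) \geq \Gamma_0(G) - 2$ for any $v$ (the word \emph{strong} being read as encoding precisely this rigidity). Under the hypothesis $\Gamma_0(G) > 2k$, the second inequality gives $\Gamma_0(G - N[v]) > 2(k - 1)$, so by induction at $k - 1$ the group $\tilde{H}_{k-2}(\ind_1(G - N[v]))$ vanishes; the first inequality kills $\tilde{H}_{k-1}(\ind_1(G - v))$ at the same $k$, after a possibly careful choice of $v$ from a minimum strong-dominating set so that no drop occurs at all. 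Substituting into the displayed sequence yields the conclusion.

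For part (ii), chordality allows one to choose $v$ to be a simplicial vertex, so that $N(v)$ induces a clique. Both $G - v$ and $G - N[v]$ remain chordal. The clique structure at $v$ yields $\Gamma(G - N[v]) \geq \Gamma(G) - 1$ because any dominating set of $G - N[v]$ can be extended to one of $G$ by adding a single vertex of $N[v]$ (a single vertex dominates the whole clique). Similarly $\Gamma(G - v) \geq \Gamma(G) - 1$. Under $\Gamma(G) > k$ this gives $\Gamma(G - N[v]) > k - 1$ and $\Gamma(G - v) > k - 1$, and the Mayer-Vietoris sequence combined with the inductive hypothesis finishes the argument.

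The main obstacle is the off-by-one bookkeeping in the $\tilde{H}_{k-1}(\ind_1(G - v))$ term, which sits at the same homological degree as the target and therefore requires the relevant domination parameter not to drop during the induction. In the chordal case this is resolved cleanly through the canonical choice of a simplicial vertex; in part (i) it must lean on the precise definition of $\Gamma_0$. An alternative one could try is a nerve-lemma cover of $\ind_1(G)$ by closed stars $\st(v_i)$ ranging over a dominating set $\{v_1,\dots,v_m\}$, but controlling which intersections are contractible appears to demand essentially the same combinatorial input, and does not obviously simplify matters over the Mayer-Vietoris induction above.
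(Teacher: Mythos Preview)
The paper does not actually prove this theorem; it is quoted from Meshulam \cite{RM} and used only as the $r=1$ base case in the proof of \Cref{thm:maindominationinside}. So there is no ``paper's own proof'' to compare against directly. That said, both the original Meshulam argument and the paper's generalization to $r\ge 1$ proceed by \emph{edge} deletion rather than vertex deletion, and this distinction exposes a genuine gap in your sketch.

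Your Mayer--Vietoris decomposition at a vertex $v$ is correct, but the term $\tilde H_{k-1}(\ind_1(G-v))$ sits at the \emph{same} homological degree as the target, so you need the relevant domination parameter of $G-v$ to satisfy the same strict inequality, not one weaker by $1$. You assert that choosing $v$ simplicial ``resolves this cleanly'' in part~(ii), but it does not: take $G=P_4$, where the endpoint $v=1$ is simplicial, $\Gamma(P_4)=2$, yet $\Gamma(P_4-v)=\Gamma(P_3)=1$. With $k=1$ one has $\tilde H_0(\ind_1(P_3))\neq 0$, so your inductive step fails as written (the conclusion still holds, but only because the connecting map happens to be surjective, which your argument does not establish). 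The same off-by-one problem afflicts part~(i): the inequality $\Gamma_0(G-v)\ge\Gamma_0(G)-1$ is all you can expect in general, and the remark about ``careful choice of $v$ from a minimum strong-dominating set'' is not substantiated.

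Meshulam's fix, mirrored in the paper's \Cref{lemma:union} and \Cref{thm:maindominationinside}, is to delete an \emph{edge} $e=\{u,v\}$ instead. One has $\ind_1(G-e)=\ind_1(G)\cup\bigl(\bar e * \ind_1(G-N[\{u,v\}])\bigr)$, and crucially $\Gamma_0(G-e)\ge\Gamma_0(G)$ with no loss at all, while $\Gamma_0(G-N[\{u,v\}])\ge\Gamma_0(G)-2$. This is exactly what makes the induction close. For the chordal case the paper (in its $r\ge 1$ generalization, \Cref{theorem:lowerboundconnectivityinside}) bypasses the $G-v$ term altogether: for a simplicial vertex $v$ it proves directly that $\ind_r(G)\simeq\bigvee_{S\in\supp_r(v)}\Sigma^r\ind_r(G-N[S])$ (\Cref{thm:main theorem for support}), so the induction runs only through graphs of the form $G-N[S]$, where the domination number drops by at most one and the homological degree drops by $r$.
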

	
	It is natural to ask, whether we can relate some topological properties of $\ind_r(G)$ to some graph theoretic invariants of $G$. One of the main motivations of this article is to establish similar results for $r$-independence complexes.
	
	\begin{definition}
		A set $D \subseteq V(G)$ is said to be a {\itshape distance $r$-dominating set} of $G$ if the distance $d(u, D)$ between each vertex $u \in V(G)\setminus D$ and $D$ is at most $r$. The minimum cardinality of a distance $r$-dominating set in $G$ is the {\itshape distance $r$-domination number} of $G$, denoted by $\gamma_r(G)$. 
	\end{definition}
	
	The distance $r$-dominating number of graphs is a well studied  notion in graph theory. For more about this invariant see, for example \cite{davila, jerold, TX}. 
	
	\begin{definition} \label{def:dominate}
		Let $S_r$ be a collection of connected subgraphs of $G$ of cardinality at most $r$. The collection $S_r$  is called a {\itshape dominating $r$-collection} if for each $v \in V(G)$ there exists an element $S \in S_r$ such that $d(v, S)$ is at most $1$. Given $r \geq 1$, the {\itshape $r$-set domination number} of $G$, denoted by $\omega_r(G)$ is the minimum $m$ such that there exists dominating $r$-collection of cardinality 
		$m$, {\it i.e.}, 
		$$
		\omega_r(G) : = \min\{|S_r|  :   S_r \ \text{is a dominating} \ \text{$r$-collection}\}.
		$$
	\end{definition}
	
	Clearly, $\gamma_1(G)=\omega_1(G) = \Gamma(G)$ and $\omega_r(G) \geq \gamma_r(G)$ for each $r\geq 1$. The following example shows that the gap between $\omega_r(G)$ and $\gamma_r(G)$ can be arbitrarily large for any $r\geq 2$.
	
	\begin{example}
		Let $G$ be the graph shown in \Cref{fig:long_star_graph}. Since $d(v_1,w)$ is at most $2$ for all vertices in $G$, $\gamma_2(G)=1$. Further, it is easy to see that $\omega_2(G)=5$. We can attach more paths of length $3$ with vertex $v_1$ to increase the number $w_2$ by keeping $\gamma_2$ constant. A similar construction can be done for any $r >2$.
	\end{example}
	
	\begin{figure}[H]
		\centering
		\begin{tikzpicture}
		[scale=0.25, vertices/.style={draw, fill=black, circle, inner sep=1.5pt}]
		\node[vertices, label=above:{$v_1$}] (v1) at (0,8)  {};
		\node[vertices] (l11) at (-5.0,4.9)  {};
		\node[vertices] (l12) at (-3.1,4.8)  {};
		\node[vertices] (l13) at (-0.95,4.2)  {};
		\node[vertices] (l14) at (1.0,4.4)  {};
		\node[vertices] (l1m) at (3.0,4.6)  {};
		\node[vertices] (l21) at (-6.5,.7)  {};
		\node[vertices] (l22) at (-4.1,.3)  {};
		\node[vertices] (l23) at (-1.5,0.2)  {};
		\node[vertices] (l24) at (2.4,0.4)  {};
		\node[vertices] (l2m) at (5.0,.7)  {};
		
		\foreach \to/\from in {v1/l11,v1/l12,v1/l13,v1/l14,v1/l1m,l11/l21,l12/l22,l13/l23,l14/l24,l1m/l2m}
		\draw [-] (\to)--(\from);
		\end{tikzpicture}\caption{}\label{fig:long_star_graph}
	\end{figure}

	The main results of this article are following.
	\begin{theorem}[See \Cref{thm:maindominationinside}]\label{theorem:main_domination}
		Let $G$ be a graph and $r \geq 1$. 
		\begin{itemize}
			\item[(i)] If $\gamma_r(G) > 2k$, then $\tilde{H}_{j}(\ind_r(G)) = 0$ for all $j \leq k+r-2$. 
			\item[(ii)] If $\omega_r(G) > 2k$, then $\tilde{H}_{j}(\ind_r(G)) = 0$ for all $j \leq k-1$. 
			
		\end{itemize}
	\end{theorem}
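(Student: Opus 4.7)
My plan is to extend Meshulam's Nerve-theoretic approach behind Theorem \ref{thm:meshulam} to the $r$-independence setting. The central building block is the observation that for any connected subgraph $S$ of $G$ with $|S|\le r$, the set $S$ is itself a face of $\ind_r(G)$, and the closed star
\[
\st(S) \;=\; \{A \in \ind_r(G) : A \cup S \in \ind_r(G)\}
\]
is a cone with apex $S$, since $A \in \st(S)$ forces $A\cup S \in \st(S)$. In particular, $\st(S)$ is contractible. These closed stars (and appropriate enlargements) will form the pieces of a cover of $\ind_r(G)$.

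For part (ii), let $\{S_1,\dots,S_m\}$ be a minimum dominating $r$-collection, so $m=\omega_r(G)>2k$. The plan is to cover $\ind_r(G)$ by contractible subcomplexes built from $\st(S_1),\dots,\st(S_m)$, verify that every nonempty $t$-fold intersection is acyclic, and invoke the homological Nerve theorem to obtain $\tilde{H}_*(\ind_r(G))\cong \tilde{H}_*(N)$ for $N$ the nerve. The condition $m>2k$, together with the dominating property of $\{S_i\}$, should then force $N$ to be $(k{-}1)$-connected, yielding $\tilde{H}_j(\ind_r(G))=0$ for $j\le k-1$. For part (i), a parallel plan uses a minimum distance-$r$-dominating set $D=\{v_1,\dots,v_n\}$ with $n=\gamma_r(G)>2k$. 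The cover elements are now subcomplexes associated to balls of radius $r$ around the $v_i$, and each is $(r{-}1)$-connected rather than contractible: the baseline connectivity comes from the fact that every subset of $V(G)$ of size $\le r$ is trivially $r$-independent, so $\ind_r(G)$ (and each ball-based piece) contains the full $(r{-}1)$-skeleton of a simplex, which is $(r{-}2)$-connected. Carrying this extra connectivity through the Nerve computation (via the spectral sequence associated to the cover) produces the promised shift by $r-1$, giving $\tilde{H}_j(\ind_r(G))=0$ for $j\le k+r-2$.

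The hard part will be twofold. First, I must check that $\{\st(S_i)\}_{i=1}^{m}$ (or its analogue in part (i)) actually covers $\ind_r(G)$. A face $A$ may intersect $N(S_i)$ for every $i$, in which case $A\cup S_i$ leaves $\ind_r(G)$ for all $i$; so the cover will likely need to be enlarged by closed stars of additional connected subgraphs of size $\le r$ sitting inside $N[S_i]$, while keeping the effective count tied to $\omega_r(G)$ (resp.\ $\gamma_r(G)$) rather than to the true cardinality of the enlarged family. Second, I must bound the connectivity of nonempty intersections: unlike the classical $r=1$ case, here $\st(S_{i_1})\cap\cdots\cap \st(S_{i_t})$ is not itself the star of $S_{i_1}\cup\cdots\cup S_{i_t}$ (that union need not be connected or of size $\le r$), so a direct cone argument fails. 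Controlling these intersections and translating the combinatorial statement ``$\omega_r(G)>2k$'' (resp.\ ``$\gamma_r(G)>2k$'') into the assertion that no small subfamily of $\{S_i\}$ (resp.\ $\{v_i\}$) suffices to capture a would-be nontrivial homology class is, in my view, where the technical core of the proof must live.
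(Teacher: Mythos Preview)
Your document is a plan, not a proof: you correctly isolate the two obstructions (covering and intersection-connectivity) and then stop. Those obstructions are not minor bookkeeping; they are the entire content of the theorem, and nothing in your outline indicates how to overcome either one. In particular, for a face $A\in\ind_r(G)$ there is no reason any $A\cup S_i$ should stay $r$-independent, so your family $\{\st(S_i)\}$ genuinely fails to cover, and your proposed fix (``enlarge by more stars inside $N[S_i]$'') destroys the link between the size of the cover and $\omega_r(G)$. Likewise, a $t$-fold intersection $\st(S_{i_1})\cap\cdots\cap\st(S_{i_t})$ has no usable description in terms of $\ind_r$ of a subgraph, so there is no inductive handle on its connectivity. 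Without concrete mechanisms for both points, the Nerve theorem gives nothing.

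The paper does \emph{not} try to cover $\ind_r(G)$ directly. Instead it fixes an edge $e=\{u,v\}$, writes
\[
\ind_r(G-e)=\ind_r(G)\cup(\bar e\ast\Delta_{u,v}),
\]
and runs Mayer--Vietoris, inducting on $k$ and on the number of edges (note $\gamma_r(G-e)\ge\gamma_r(G)$ and $\omega_r(G-e)\ge\omega_r(G)$). The Nerve theorem is applied only to the \emph{intersection} $\ind_r(G)\cap(\bar e\ast\Delta_{u,v})$, which the paper shows decomposes as a union of very explicit contractible pieces $X_l$ indexed by pairs $(S,T)$ of supports of $u$ and $v$. A structural lemma identifies any $t$-fold intersection $\bigcap X_{i_l}$ as either a cone over $\bar e$ or as $\bd(\bar e)\ast\overline{\mathcal L}\ast\ind_r\bigl(G-N[\bigcup_l L_l]\bigr)$; removing the closed neighbourhoods of the $L_l$ drops $\gamma_r$ (resp.\ $\omega_r$) by at most $2t$, so the inductive hypothesis supplies the needed connectivity of each intersection, and the nerve of the $X_l$'s is a simplex. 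This edge-deletion route is precisely what makes the ``drop by $2t$'' bookkeeping work and is the missing idea in your sketch.
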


	
	\begin{theorem}[See \Cref{theorem:lowerboundconnectivityinside}] \label{theorem:lowerboundconnectivity}  Let $G$ be a chordal graph and $r \geq 1$.
		\begin{itemize}
			\item[(i)] $\ind_r(G)$ is either contractible or homotopy equivalent to wedge of spheres.
			

			\item[(ii)] If  $\omega_r(G) > k$, then $\tilde{H}_{i}(\ind_r(G)) = 0$ for each $i \leq rk-1$.
			\item[(iii)] If $\ind_r(G)\simeq  \bigvee \mathbb{S}^{i_k}$, then for each $i_k$ there exists a positive integer $s_k $ such that $i_k = rs_k-1$.
		\end{itemize}
	\end{theorem}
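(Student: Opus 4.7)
The plan is to prove all three parts simultaneously by induction on $|V(G)|$, with the trivial base case $V(G)=\emptyset$. For the inductive step, Dirac's theorem yields a simplicial vertex $v$ of the chordal graph $G$, whose closed neighborhood $N[v]=\{v,v_1,\ldots,v_d\}$ is a clique; both $G\setminus v$ and $G\setminus N[v]$ remain chordal.

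The workhorse decomposition is the star--deletion splitting at $v$: since the star $\mathrm{st}_{\ind_r(G)}(v)=v*L$ is contractible and the deletion equals $\ind_r(G\setminus v)$, we obtain
\[
\ind_r(G)\;\simeq\;\mathrm{cone}\bigl(L\hookrightarrow \ind_r(G\setminus v)\bigr),
\]
where $L:=\mathrm{lk}_{\ind_r(G)}(v)$ is the subcomplex of $\sigma\subseteq V(G)\setminus\{v\}$ for which $\sigma\cup\{v\}$ is $r$-independent in $G$. This yields a long exact Mayer--Vietoris sequence linking $\tilde H_\ast(L)$, $\tilde H_\ast(\ind_r(G\setminus v))$, and $\tilde H_\ast(\ind_r(G))$.

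The crux is the homotopical analysis of $L$. Because $N(v)$ is a clique, for any $\sigma\in\ind_r(G\setminus v)$ the intersection $\sigma\cap N(v)$ lies in a single connected component $C_0$ of $G[\sigma]$, and $\sigma\in L$ iff either $\sigma\cap N(v)=\emptyset$ or $|C_0|\le r-1$. I would argue that $L$ is itself contractible or a wedge of spheres of dimensions $\equiv -2\pmod r$: the ``budget'' for the component containing $v$ is $r$, and the clique condition effectively forces the component touching $N(v)$ to have size at most $r-1$, shifting the natural sphere dimension down by one relative to an $\ind_r$ of a subgraph. Combined with the induction hypothesis on $\ind_r(G\setminus v)$ (a wedge of $(rs-1)$-spheres), the mapping cone is a wedge of $(rs-1)$-spheres, proving (i) and (iii). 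For (ii), we track $\omega_r$ through the induction: since adjoining $\{v\}$ to a dominating $r$-collection of $G\setminus N[v]$ yields one for $G$, we have $\omega_r(G\setminus N[v])\ge \omega_r(G)-1>k-1$, and a similar inequality holds for the subgraph whose $r$-independence complex captures $L$; combined with the induction hypothesis and the long exact sequence, the desired vanishing $\tilde H_i(\ind_r(G))=0$ for $i\le rk-1$ follows.

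The main obstacle is the explicit homotopical description of $L$ in terms of $r$-independence complexes of smaller chordal graphs. For $r=1$ the link is simply $\ind(G\setminus N[v])$ and the argument parallels Meshulam's proof of the classical chordal case, but for $r\ge 2$ the extra component-size constraint on faces meeting $N(v)$ obstructs a direct identification. Overcoming this likely requires either a fold-type collapse exploiting the clique structure of $N[v]$, or a discrete Morse matching on the face poset producing critical cells only in dimensions $\equiv -1\pmod r$. Once a tractable model for $L$ is in hand, the Mayer--Vietoris bookkeeping and the induction together complete all three parts.
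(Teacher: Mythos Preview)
Your outline correctly identifies the starting point: Dirac's simplicial vertex $v$ and induction on $|V(G)|$. The gap you yourself name is the real one. Your star--deletion approach hinges on a homotopical description of the link $L=\mathrm{lk}_{\ind_r(G)}(v)$, and for $r\ge 2$ this link is \emph{not} the $r$-independence complex of any subgraph of $G$ (the component through $v$ obeys a size bound of $r-1$ instead of $r$). Without a concrete model for $L$, the mapping-cone long exact sequence gives no control over the sphere dimensions, and the ``budget'' heuristic you sketch does not establish that $L$ is a wedge of spheres in the required dimensions. The suggested remedies (folds, discrete Morse) are left as hopes, not arguments; as written, the proposal is a plan with its central step missing.

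The paper sidesteps this difficulty by covering $\ind_r(G)$ differently. Instead of $\st(v)\cup\mathrm{del}(v)$, it writes
\[
\ind_r(G)=\st(v)\cup\bigcup_{S\in\supp_r(v,G)}\st(S),
\]
where $\supp_r(v,G)$ is the set of $r$-element subsets $S\not\ni v$ with $G[S\cup\{v\}]$ connected. Because $v$ is simplicial, every such $S$ has $G[S]$ connected and $N(v)\subseteq N[S]$; these two facts force $\st(S_i)\cap\st(S_j)\subseteq\st(v)$ for $i\ne j$ and yield the explicit identification
\[
\st(S)\cap\st(v)=\bd(\Delta^{S})\ast\ind_r\bigl(G-N[S]\bigr)\simeq\Sigma^{r-1}\bigl(\ind_r(G-N[S])\bigr).
\]
Bj\"orner's wedge lemma then gives the clean splitting
\[
\ind_r(G)\simeq\bigvee_{S\in\supp_r(v,G)}\Sigma^{r}\bigl(\ind_r(G-N[S])\bigr),
\]
from which (i), (ii), (iii) follow at once by induction on $|V(G)|$, using $\omega_r(G-N[S])\ge\omega_r(G)-1$ for (ii). The key idea you are missing is that the ``right'' contractible pieces are the stars of the $r$-supports of $v$, not the star/deletion pair at $v$; this makes every pairwise intersection an honest suspension of a smaller $\ind_r$ of a chordal graph, so the link $L$ never needs to be analysed at all.
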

	
	\begin{theorem} [See \Cref{thm:reverseconstructioninside}]\label{thm:reverseconstruction}
		Let $r \geq 2$. Let $(d_1, \ldots, d_n)$ and $(k_1, \ldots, k_n)$ be two sequences of positive integers. There exists a chordal graph $G$ such that $\ind_r(G) \simeq \bigvee\limits_{i=1}^n \vee_{d_i} \mathbb{S}^{rk_i-1}$. 
	\end{theorem}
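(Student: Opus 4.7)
\emph{Plan.} The natural atomic chordal realization of a sphere $\bS^{rk-1}$ is the forest $B_k := \bigsqcup_{s=1}^{k} P_{r+1}$: using the join formula $\ind_r(H_1 \sqcup H_2) = \ind_r(H_1)*\ind_r(H_2)$ together with $\ind_r(P_{r+1}) = \partial\Delta^{r} \simeq \bS^{r-1}$, one has
\[
\ind_r(B_k) \;\simeq\; (\bS^{r-1})^{*k} \;\simeq\; \bS^{rk-1}.
\]
Thus every single wedge summand $\bS^{rk_i-1}$ in the target already admits a chordal realization.

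I would then assemble the building blocks into a single chordal graph whose $r$-independence complex is the prescribed wedge, by induction on $N := \sum_{i=1}^{n} d_i$. The base case $N=1$ is just $B_{k_1}$, and the inductive step reduces to the following single-sphere extension lemma: given any chordal graph $G$ with $\ind_r(G) \simeq W$ and any $k \geq 1$, one can construct a chordal graph $\widetilde{G}$ with $\ind_r(\widetilde{G}) \simeq W \vee \bS^{rk-1}$. To build $\widetilde{G}$ I take the vertex-disjoint union $G \sqcup B_k$ and adjoin a new simplicial vertex $u$ whose closed neighborhood is a small clique gadget linking a chosen simplicial vertex $v_G$ of $G$ to an endpoint $v_B$ of one of the $P_{r+1}$ summands of $B_k$ (for instance, a triangle on $\{u, v_G, v_B\}$, arranged so that $u$ is simplicial and chordality is preserved). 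The homotopy type of $\ind_r(\widetilde{G})$ would then be analyzed by the link--star pushout at $u$,
\[
\ind_r(\widetilde{G}) \;\simeq\; \ind_r(\widetilde{G}-u) \cup_{\mathrm{link}(u)} \mathrm{star}(u),
\]
combined with the recursive chordal decomposition formula established in the proof of \Cref{theorem:lowerboundconnectivity}.

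The main obstacle is calibrating the gadget so that two things happen simultaneously: the inclusion $\mathrm{link}(u) \hookrightarrow \ind_r(\widetilde{G}-u)$ is null-homotopic (so that the pushout collapses to a wedge $\ind_r(\widetilde{G}-u) \vee \Sigma\,\mathrm{link}(u)$), and the join structure $\ind_r(G)*\ind_r(B_k)$ that would naively arise from the disjoint union $G\sqcup B_k$ gets absorbed by the gadget into the pointed wedge $\ind_r(G)\vee\ind_r(B_k)=W\vee\bS^{rk-1}$, rather than producing extra suspensions or higher-dimensional joins. If this direct pushout analysis turns out to be recalcitrant, a fallback is to run the recursive chordal decomposition from \Cref{theorem:lowerboundconnectivity}(i) in reverse: each simplicial-vertex reduction there takes $\ind_r(G)$ to a wedge involving $\ind_r(G')$ and a controlled sphere summand of dimension of the form $rs-1$, so by repeatedly adjoining simplicial vertices whose neighborhoods are tuned to contribute exactly one new critical cell of dimension $rk_i-1$, one can grow the desired chordal graph cell-by-cell, matching the target wedge summand by summand.
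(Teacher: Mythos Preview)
Your building block is fine: $\ind_r(P_{r+1})=\partial\Delta^r\simeq\bS^{r-1}$ and the join formula give $\ind_r(B_k)\simeq\bS^{rk-1}$, so each summand individually has a chordal model. The genuine gap is the wedge-extension step, which you flag as unresolved---and the triangle gadget in fact fails. Take $r=2$, let $G$ be the path on $a,b,c$ (so $\ind_2(G)\simeq\bS^1$), let $B_1$ be the path on $p,q,s$, and adjoin $u$ via the triangle $\{u,a,p\}$. Then $u$ is simplicial with $\supp_2(u,\tilde G)=\{\{a,b\},\{a,p\},\{p,q\}\}$, and for each such $S$ the graph $\tilde G-N[S]$ is a single edge or two isolated vertices, so $\ind_2(\tilde G-N[S])$ is contractible. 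The simplicial-vertex wedge decomposition then forces $\ind_2(\tilde G)$ itself to be contractible, not $\bS^1\vee\bS^1$. The edge $v_Gv_B$ glues the two pieces too tightly: once $N[S]$ is removed for any support $S$ of $u$, neither $G$ nor $B_k$ survives with its homotopy type intact, so $W$ and $\bS^{rk-1}$ are destroyed rather than wedged. Your fallback (``run the chordal decomposition in reverse'') is the right instinct but is not a proof as stated.

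The paper sidesteps induction and constructs the graph in one shot. Start with a path $v_1\cdots v_r$, attach a complete graph on $\{v_r\}\cup W$ where $|W|=\sum_i d_i$ (one marked vertex per desired summand), hang a path on $(r+2)(k_i-1)+1$ vertices off each $x\in W_i\subseteq W$, and add cross-edges from every $a\in W$ to the vertices of the other paths so that the result is chordal and the closed neighbourhoods are controlled. Now $v_1$ is simplicial with $N(v_1)=\{v_2\}$, and a connected $r$-set containing $v_2$ but not $v_1$ is forced to be $\{v_2,\ldots,v_r,x\}$ for exactly one $x\in W$; thus $\supp_r(v_1)$ is in bijection with $W$. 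For $x\in W_i$ the deletion $\tilde G-N[S_x]$ is a single path on $(r+2)(k_i-1)-1$ vertices, whose $\ind_r$ is $\bS^{r(k_i-1)-1}$ by the Paolini--Salvetti computation, and one $\Sigma^r$ yields $\bS^{rk_i-1}$. A single application of the simplicial-vertex wedge decomposition then gives exactly $\bigvee_i\vee_{d_i}\bS^{rk_i-1}$. The moral: rather than gluing prefabricated models and hoping for a wedge, engineer the $r$-supports of one simplicial vertex to enumerate the summands directly.
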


	This article is organized as follows: In \Cref{sec:prel}, we give basic definitions and results which are used in the remaining sections. \Cref{sec:dominationnumber} is dedicated to the proof of \Cref{theorem:main_domination}. Proof of \Cref{theorem:lowerboundconnectivity} and \Cref{thm:reverseconstruction} is given in \Cref{sec:chordal}.
	
	\section{Preliminaries}\label{sec:prel}
	
	A {\it graph} is an ordered pair $G=(V(G),E(G))$ where $V(G)$ is called the set of vertices and $E(G) \subseteq V(G) \times V(G)$, the set of unordered edges of $G$.  If $G$ is a graph on $n$ vertices, then we also say that $G$ is of cardinality $n$. The vertices $v_1, v_2 \in V(G)$ are said to be adjacent, if $(v_1,v_2)\in E(G)$. This is also denoted by $v_1 \sim v_2.$  For a subset $U \subset V(G)$, the {\it induced subgraph} $G[U]$ is the subgraph whose set of vertices  $V(G[U]) = U$
	and the set of edges
	$E(G[U]) = \{(a, b) \in E(G) \ | \ a, b \in U\}$. We also denote the graph $G[V(G) \setminus U]$ by $G- U$. For a graph $G$ and $S \subseteq V(G)$, let $N(S) := \{ v \in V(G) : v \sim s \mathrm{ ~for ~some~} s \in S\}$ and $N[S] = N(S) \cup S$.  
	
	For two distinct vertices $u$ and $v$, the distance $d(u, v)$ between $u$ and $v$ is the length of a shortest path between $u$ and $v$. Here, the length of a path is the number of edges in that path. 	If $X$ and $Y$ are two disjoint subsets of $V(G)$, then the distance between $X$ and $Y$ is defined as $d(X,Y) = \text{min}\{d(x,y): x \in X,y\in Y\}$.

	A subset $S \subseteq V(G)$ is  called a {\it dominating set} if for each vertex $v \in V(G) \setminus S$, there exists a $s \in S$ such that $v \sim s$. The {\it domination number} $\Gamma(G)$ is the minimum cardinality of a dominating set. Set $S$ is called a {\it strong dominating set} if each vertex  $v \in V(G)$ is adjacent to some vertex of $S$. The {\it strong domination number} $\Gamma_0(G)$ is the minimum cardinality of a strong dominating set. 
	\begin{definition}
		For $r\geq 0$, $v \in V(G)$, $S \subseteq V(G)$ is called  an {\it $r$-support} of $v$ in $G$, if $v \notin S, |S|=r$ and $G[S\cup \{v\}]$ is a connected graph. Let Supp$_r(v,G)$ denote the collection of all $r$-supports in $G$, {\it i.e.}, $$\mathrm{Supp}_r(v,G) = \{S : S \text{ is an $r$-support of $v$ in $G$}\}.$$
		Clearly, $\supp_0(v,G)=\{\emptyset\}$. We say that Supp$_r(v,G)$ is {\itshape connected}, if $G[S]$ is connected for all $S\in \mathrm{Supp}_r(v,G).$ Whenever the underlying graph is clear, we simply denote it by Supp$_r(v)$. 
	\end{definition}
	

	A {\it finite abstract simplicial complex} $K$ is a collection of
	finite sets such that if $\tau \in K$ and $\sigma \subset \tau$,
	then $\sigma \in K$. The elements  of $K$ are called {\it simplices}
	of $K$. The  dimension of a simplex $\sigma$ is equal to $|\sigma| - 1$, here $|\cdot|$ denote the cardinality. 
	The dimension of an abstract  simplicial complex is the maximum of the dimensions of its simplices. The $0$-dimensional
	simplices are called vertices of $K$. If $\sigma \subset \tau$, we say that
	$\sigma$ is a face of $\tau$.  If a simplex has dimension $k$, it is said to
	be $k${\it -dimensional} or   $k$-{\it simplex}.  The {\it boundary} of a $k$-simplex
	$\sigma $ is the simplicial complex, consisting of all faces of $\sigma$
	of dimension $\leq k-1$ and it is denoted by $\bd(\sigma).$
	The {\it star} of a simplex $\sigma \in K$ is the subcomplex of $K$ defined as
	$$
	\st_K(\sigma):= \{\tau \in K \ |  \ \sigma \cup \tau \in K\}.
	$$
	
	Whenever the underlying space $K$ is clear, we write $\st(\sigma)$ to denote $\st_K(\sigma)$.
	
	\begin{definition} Let $K_1$ and $K_2$ be two simplicial complexes whose vertices are indexed by disjoint sets. The join of $K_1$ and $K_2$ is the simplicial complex $K_1 \ast K_2$, whose simplices are those subset 
		 $\sigma \subseteq V(K_1) \cup V(K_2)$ such that $\sigma \cap V(K_1) \in K_1$ and $\sigma \cap V(K_2) \in K_2$.
		\end{definition}
	
	In this article we consider a simplicial complex as a topological space, namely its  geometric realization. For  definition of geometric realization and details about simplicial complexes, we refer to the book \cite{dk} by  Kozlov.

		A topological space $X$ is said to be $k$-{\it connected} if the homotopy groups $\pi_m(X)$ are trivial for each $m \in \{0,1,\dots,k\}$.

For a space $X$, let ${\Sigma}^r(X)$ denote its $r$-fold suspension, where $r\geq 1$ is a natural number. Recall that, there is a homotopy equivalence
 
 \begin{equation}\label{remark:join of space with spheres is suspension}  \mathbb{S}^{r-1} \ast X \simeq \Sigma^{r}(X).
\end{equation}
 If $X$ is empty, then  $\Sigma(X) \simeq \bS^{0}$.
 The following results will be used repeatedly in this article.
 
	\begin{lemma}\cite[Lemma 10.4 (ii)]{bjorner}\label{lem:wedge}
		Let $K = K_0 \cup K_1 \cup \ldots \cup K_n $ be a simplicial complex with subcomplexes $K_i$ and assume that $K_i \cap K_j \subseteq K_0$ for all $ 1 \leq i < j \leq n$. If $K_i$ is contractible for all $0 \leq i \leq n$, then
		$$
		K \simeq \bigvee\limits_{i=1}^{n} \Sigma(K_i \cap K_0). 
		$$
	\end{lemma}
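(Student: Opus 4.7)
The plan is to prove this by collapsing $K_0$ to a point and using the contractibility of each piece to replace quotients by suspensions. The three ingredients I would invoke are: (a) for a CW pair $(X, A)$ with $A$ contractible, the collapse $X \to X/A$ is a homotopy equivalence; (b) for a CW pair $(X, A)$ with $X$ contractible, $X/A \simeq \Sigma A$; and (c) the intersection hypothesis $K_i \cap K_j \subseteq K_0$ forces the images of $K_1, \dots, K_n$ in $K/K_0$ to meet pairwise only at the basepoint, so the quotient decomposes as a wedge.

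First I would apply (a) to the simplicial pair $(K, K_0)$ to obtain $K \simeq K/K_0$. Next, writing $q \colon K \to K/K_0$ for the quotient map and $\ast$ for the basepoint $q(K_0)$, the hypothesis on pairwise intersections gives $q(K_i) \cap q(K_j) = \{\ast\}$ for all $i \neq j$ with $i, j \geq 1$; combined with the fact that $K_0, K_1, \dots, K_n$ cover $K$, this yields a pointed identification
$$K/K_0 \;\cong\; \bigvee_{i=1}^n K_i / (K_i \cap K_0).$$
Finally, I would apply (b) to each simplicial pair $(K_i, K_i \cap K_0)$, which is admissible since $K_i$ is contractible by hypothesis, to replace $K_i/(K_i \cap K_0)$ with $\Sigma(K_i \cap K_0)$. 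Chaining the three equivalences yields the stated wedge of suspensions.

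The hard part will be bookkeeping around degenerate cases, in particular when some $K_i \cap K_0$ is empty: one has to adopt the conventions $\Sigma \emptyset = \mathbb{S}^0$ and interpret $X/\emptyset$ as $X$ with a disjoint basepoint adjoined, so that (b) reads $X/\emptyset \simeq \mathbb{S}^0 \simeq \Sigma \emptyset$ whenever $X$ is contractible, keeping the wedge formula consistent. One must also verify that the basepoint identification makes $q(K_i)$ homeomorphic to $K_i/(K_i \cap K_0)$ rather than some coarser quotient, which follows from the fact that $q$ identifies points of $K_i$ only through their image in $K_i \cap K_0 \subseteq K_0$. With these conventions fixed, each of (a), (b), (c) is a standard consequence of cofibrancy of CW pairs, and the lemma follows by chaining them.
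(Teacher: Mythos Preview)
The paper does not prove this lemma; it is quoted from Bj\"orner's survey and used as a black box, so there is no in-paper argument to compare your proposal against.

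That said, your sketch is a correct proof of the cited result. The three ingredients (a)--(c) are exactly what is needed: collapse the contractible $K_0$, observe that the pairwise-intersection hypothesis forces $K/K_0$ to split as the wedge of the quotients $K_i/(K_i\cap K_0)$, and then use contractibility of each $K_i$ to identify each summand with a suspension via the standard cofibre manoeuvre $X/A \simeq X\cup_A CA \simeq (X\cup_A CA)/X = CA/A = \Sigma A$ for $X$ contractible. Your treatment of the degenerate case $K_i\cap K_0=\emptyset$ with the conventions $X/\emptyset = X\sqcup\{\ast\}$ and $\Sigma\emptyset = \mathbb{S}^0$ is also the right fix, and it matches how the paper itself handles empty suspensions in its preliminaries.
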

	
		The {\it nerve} of a family of sets $(A_i)_{i \in I}$  is the simplicial complex $\mathbf{N} = \mathbf{N}(\{A_i\})$ defined on the vertex set $I$ so that a finite subset $\sigma \subseteq I$ is in $\mathbf{N}$ precisely when $\bigcap\limits_{i \in \sigma} A_i \neq \emptyset$.

	\begin{theorem}\cite[Theorem 10.6(ii)]{bjorner} \label{thm:nerve}
		Let $K$ be a simplicial complex and $(K_i)_{i \in I}$ be a family of subcomplexes such that $K = \bigcup\limits_{i \in I} K_i$.
	Suppose every nonempty finite intersection $K_{i_1} \cap \ldots \cap K_{i_t}$ is $(k-t+1)$-connected. Then $K$ is $k$-connected if and only if  $\mathbf{N}(\{K_i\})$ is $k$-connected.
	
	\end{theorem}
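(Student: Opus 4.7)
The plan is to build an auxiliary space $Y$ (a model for the homotopy colimit of the subcomplexes $K_\sigma := \bigcap_{i \in \sigma} K_i$ indexed by the face poset of $\mathbf{N}$) that fits into a zig-zag
\[ K \;\xleftarrow{\;p\;}\; Y \;\xrightarrow{\;q\;}\; \mathbf{N}, \]
in which $p$ is a homotopy equivalence and $q$ induces isomorphisms on $\pi_i$ for all $i\le k$. Since $k$-connectivity is determined by the groups $\pi_0,\ldots,\pi_k$, this immediately yields that $K$ is $k$-connected if and only if $\mathbf{N}$ is. Concretely, I would realize
\[ Y \;=\; \Bigl(\bigsqcup_{\sigma \in \mathbf{N}} K_\sigma \times \Delta^{\sigma}\Bigr) \bigm/ \sim, \]
with the usual identifications along codimension-one face maps, and take $p,q$ to be the two coordinate projections.

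The first step is to show that $p\colon Y \to K$ is a homotopy equivalence. The preimage of a point $x\in K$ is homeomorphic to the realization of the subposet $\{\sigma \in \mathbf{N} : x \in K_\sigma\}$, which has the unique maximum element $\sigma_x = \{i : x \in K_i\}$ and is therefore a contractible simplex. A skeletal induction on $\mathbf{N}$, gluing the pairs $(K_\sigma \times \Delta^{\sigma},\, K_\sigma \times \partial \Delta^{\sigma})$ one face of $\mathbf{N}$ at a time, upgrades ``contractible fibres'' to a genuine homotopy equivalence via the standard pushout/gluing argument.

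The second step, and the main one, is to show $q\colon Y \to \mathbf{N}$ is a $\pi_i$-isomorphism for $i\le k$. I would filter both sides by the skeleta of $\mathbf{N}$: with $Y^{(m)} := q^{-1}(\mathbf{N}^{(m)})$, the inclusion $Y^{(m-1)} \hookrightarrow Y^{(m)}$ is built by pushouts along $K_\sigma \times \partial \Delta^{\sigma} \hookrightarrow K_\sigma \times \Delta^{\sigma}$ for the $m$-simplices $\sigma$ (that is, $|\sigma|=m+1$), while in $\mathbf{N}$ only $\partial \Delta^\sigma \hookrightarrow \Delta^\sigma$ is attached. The hypothesis, specialized to $t=m+1$, states precisely that $K_\sigma$ is $(k-m)$-connected; consequently the cofibres $\Sigma^{m}(K_\sigma)$ of the attachments are $k$-connected. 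Applying homotopy excision (or, equivalently, the Bousfield--Kan/Mayer--Vietoris spectral sequence for the covering $\{Y^{(m)}\}$, whose $E^1$-page assembles $\pi_p(K_\sigma)$ with $|\sigma|=q+1$ and in which only the $q=0$ row survives in total degree $\le k$) shows that, comparing skeleton by skeleton, $q$ is a $\pi_i$-isomorphism for $i \le k$.

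The main obstacle is exactly this second step: one has to verify that the exponent ``$k-t+1$'' in the hypothesis is perfectly calibrated so that each stratum $K_\sigma \times \Delta^\sigma$ contributes no new homotopy below dimension $k+1$ beyond what the corresponding cell of $\mathbf{N}$ contributes. (In particular, for $\sigma = \emptyset$ / $t=1$ one needs $K_i$ itself to be $k$-connected, which is consistent.) Once that bookkeeping is in place, combining the two steps gives $\pi_i(K) \cong \pi_i(Y) \cong \pi_i(\mathbf{N})$ for all $i\le k$, which is the claimed equivalence of $k$-connectivity.
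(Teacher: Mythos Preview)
The paper does not prove this statement at all: it is quoted verbatim as \cite[Theorem~10.6(ii)]{bjorner} and used as a black box in the proof of \Cref{thm:maindominationinside}. So there is no ``paper's own proof'' to compare against.

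That said, your outline is essentially the standard homotopy-colimit proof of the nerve lemma (this is, in fact, close to how Bj\"orner argues in the cited reference). Two small remarks on the write-up. First, your parenthetical ``for $\sigma=\emptyset$ / $t=1$'' is a slip: the vertices of $\mathbf{N}$ correspond to singletons $\sigma=\{i\}$, i.e.\ $t=|\sigma|=1$, not to $\sigma=\emptyset$. Second, the passage ``the cofibres $\Sigma^m(K_\sigma)$ of the attachments are $k$-connected, so by homotopy excision\ldots'' is the right idea but compresses the genuinely delicate step; to make it rigorous you need either the Blakers--Massey range carefully tracked through the cube of pushouts, or the homotopy spectral sequence of the simplicial space $m\mapsto \bigsqcup_{|\sigma|=m+1}K_\sigma$ (whose $E_2$-term is $\tilde{H}_p(\mathbf{N};\pi_q(K_\bullet))$ in the relevant range). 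Either works, but neither is a one-liner; if you submit this as a proof you should spell out one of them.
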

	\section{Proof of \Cref{theorem:main_domination}}\label{sec:dominationnumber}
	Throughout this section, we fix the graph $G$ and an edge $e = \{u, v\} \in E(G)$. 
	Let $G -e$ denote the graph obtained from $G$ by removing $e$, {\itshape i.e.},  $V(G - e) = V(G)$ and $E(G-e) = E(G) \setminus \{e\}$. For a set $A$, let $\overline{A}$ denote the  simplex on vertex set $A$. For $0\leq i ,j \leq r-1$, define
	
	\begin{equation*}
	\begin{split}
	\S_{i,j}^{u,v} & : = \{ (S,T) : S \in \supp_i(u,G), T \in \supp_j(v,G) \mathrm{~and~} (G-e)[S\cup T \cup \{u,v\}] \mathrm{~is~not~connected}\}\\
	\mathrm{and} \\
	\Delta_{u, v} & : = \bigcup\limits_{i,j \leq r-1}\bigcup\limits_{ \substack{(S,T) \in \S_{i,j}^{u,v} }}\overline{S\cup T} \ast \ind_r(G-N[S \cup T \cup \{u,v\}]).\\
	\end{split}
	\end{equation*}
	
	Observe that, for $(S,T)\in \S_{i,j}^{u,v}$ we have that $(G-e)[S\cup T \cup \{u,v\}] = G[S \cup u\}] \sqcup G[T \cup \{v\}]$ which implies $S\cap T=\emptyset $ and $G[S\sqcup T] = G[S] \sqcup G[T]$.
	\begin{lemma}\label{lemma:union} For all $r\geq 1$, we have
	\begin{itemize}
		\item[($i$)]$\ind_r(G-e) = \ind_r(G) \cup  (\bar{e}\ast \Delta_{u, v})$.
		\item[($ii$)] 	$\ind_r(G) \cap  (\bar{e}\ast \Delta_{u, v}) = \big{(} \bigcup\limits_{i+j \leq r-2}   \bigcup\limits_{(S,T)\in \S_{i,j}^{u,v}} \bar{e} \ast (\overline{S \cup T})\ast \ind_r(G- N[S\cup T \cup \{u,v\}]) \big{)} \bigcup $ \\ \hspace*{3.85cm}$\big{(}  \bigcup\limits_{i,j \leq r-1} \bigcup\limits_{(S,T)\in \S_{i,j}^{u,v}} \bd(\bar{e}) \ast (\overline{S \cup T}) \ast \ind_r(G- N[S\cup T \cup \{u,v\}]) \big{)}$.
		\end{itemize}
	\end{lemma}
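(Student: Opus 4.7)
The plan is to prove each equality by mutual inclusion, with a case split according to whether a face contains the pair $\{u,v\}$ as a subset.

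For part $(i)$, the inclusion $\ind_r(G)\subseteq \ind_r(G-e)$ is automatic since deleting an edge can only split components. For $\bar{e}\ast\Delta_{u,v}\subseteq \ind_r(G-e)$, I would pick a typical face $\{u,v\}\cup B\cup F$ with $B\subseteq S\cup T$, $F\in \ind_r(G-N[S\cup T\cup\{u,v\}])$ and $(S,T)\in \S_{i,j}^{u,v}$, and use that $(G-e)[S\cup T\cup\{u,v\}]=G[S\cup\{u\}]\sqcup G[T\cup\{v\}]$ together with the fact that $F$ has no neighbours in $S\cup T\cup\{u,v\}$; every component of $(G-e)[\{u,v\}\cup B\cup F]$ is then contained in $G[S\cup\{u\}]$, $G[T\cup\{v\}]$, or a component of $G[F]$, each of size at most $r$. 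Conversely, if $A\in \ind_r(G-e)\setminus\ind_r(G)$, then $u,v\in A$ and $e$ must be a bridge in $G[A]$ splitting some component of size exceeding $r$ into two pieces of size at most $r$. Letting $S_0\cup\{u\}$ and $T_0\cup\{v\}$ be the components of $u$ and $v$ in $(G-e)[A]$ produces $(S_0,T_0)\in\S_{|S_0|,|T_0|}^{u,v}$ with both sizes at most $r-1$; a stray neighbour of $S_0\cup T_0\cup\{u,v\}$ inside $A$ would be absorbed into one of the two components, so $F:=A\setminus(S_0\cup T_0\cup\{u,v\})$ lies in $\ind_r(G-N[S_0\cup T_0\cup\{u,v\}])$ and $A\in \bar{e}\ast\overline{S_0\cup T_0}\ast\ind_r(G-N[S_0\cup T_0\cup\{u,v\}])$.

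For part $(ii)$, the easy direction verifies that each piece of the right-hand union lies in $\ind_r(G)\cap(\bar{e}\ast\Delta_{u,v})$. A face of the second family meets $\bar{e}$ in a proper subset of $\{u,v\}$, so $G$ and $G-e$ induce the same subgraph on it and part $(i)$ gives membership in $\ind_r(G)$. A face of the first family has the form $\{u,v\}\cup B\cup F$ with $i+j\leq r-2$, so its component containing $\{u,v\}$ in $G$ has at most $i+j+2\leq r$ vertices, and its remaining components sit inside $G[S]$, $G[T]$, or $G[F]$, each of size at most $r$. For the reverse inclusion, given $\sigma\in\ind_r(G)\cap(\bar{e}\ast\Delta_{u,v})$, I trivially place $\sigma$ in the second family when $\{u,v\}\not\subseteq\sigma$; otherwise I shrink any chosen witness $(S,T)\in\S_{i,j}^{u,v}$ by letting $C$ be the component of $\{u,v\}$ in $G[\sigma]$ and setting $S':=C\cap S$, $T':=C\cap T$. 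Connectedness of $G[S'\cup\{u\}]$ and $G[T'\cup\{v\}]$ comes from their role as components of $(G-e)[C]$, and the disconnection of $(G-e)[S'\cup T'\cup\{u,v\}]$ is inherited from that of $(G-e)[S\cup T\cup\{u,v\}]$, yielding $(S',T')\in\S_{|S'|,|T'|}^{u,v}$; the bound $|C|\leq r$ then forces $|S'|+|T'|\leq r-2$.

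The crux is to check that the leftover $(B\setminus(S'\cup T'))\cup F$ lies in $\ind_r(G-N[S'\cup T'\cup\{u,v\}])$. The $r$-independence is immediate because its $G$-components are contained in $G[S]$, $G[T]$, or in components of $G[F]$, each of size at most $r$. Avoidance of $N[S'\cup T'\cup\{u,v\}]$ for the $F$-part follows from $N[S'\cup T'\cup\{u,v\}]\subseteq N[S\cup T\cup\{u,v\}]$, while for a vertex $w\in B\setminus(S'\cup T')$ two observations apply: any $G$-adjacency of $w\in B\cap S$ to $u$ or to $S'$ (or symmetrically of $w\in B\cap T$ to $v$ or $T'$) would extend $C$ inside $G[\sigma]$, contradicting $w\notin C$; and cross $G$-adjacencies from $B\cap S$ to $T'\cup\{v\}$ or from $B\cap T$ to $S'\cup\{u\}$ cannot exist at all, because the only $G$-edge between $S\cup\{u\}$ and $T\cup\{v\}$ is $e$ itself and $w\notin\{u,v\}$. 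This shrinking of the witness is the main obstacle; everything else reduces to tracking which induced subgraphs are unaffected by the deletion of $e$.
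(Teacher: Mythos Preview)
Your proof is correct and follows essentially the same route as the paper: both parts are proved by mutual inclusion with a case split on whether $\{u,v\}\subseteq\sigma$, and in the key case of part~$(ii)$ you shrink the witness $(S,T)$ to $(S',T')=(C\cap S,\,C\cap T)$ where $C$ is the $G[\sigma]$-component of $\{u,v\}$, exactly as the paper does. One small simplification you may want to adopt: your case analysis showing that $(B\setminus(S'\cup T'))\cup F$ avoids $N[S'\cup T'\cup\{u,v\}]$ can be replaced by the single observation that, since $C=V(C)=S'\cup T'\cup\{u,v\}$ is a connected component of $G[\sigma]$, no vertex of $\sigma\setminus V(C)$ can be adjacent in $G$ to a vertex of $V(C)$; this is the paper's one-line argument and subsumes all of your sub-cases.
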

	\begin{proof}
	\begin{itemize}
	    \item[($i$)]
		Clearly, $\ind_r(G) \subseteq \ind_r(G-e)$. Since, $|S\cup \{u\}| \leq r, |T \cup \{v\}| \leq r$ and $S \cup T \cup \{u, v\}$ is not connected, we conclude that 
		$\bar{e} \ast \Delta_{u, v} \subseteq \ind_r(G-e)$. So, $\ind_r(G) \cup  (\bar{e}\ast \Delta_{u, v}) \subseteq \ind_r(G-e)$. Now, let $\sigma \in \ind_r(G-e)$. If  $ u \notin  \sigma$ or $v \notin \sigma$, then clearly $\sigma \in \ind_r(G)$. So, assume that $\{u, v\} \subseteq \sigma$. Let $C_u$ and $C_v$ be the connected components of $(G - e)[\sigma]$ containing $u$ and $v$ respectively. Observe that either $C_u = C_v$ or $V(C_u )\cap V(C_v) = \emptyset$. If $C_u = C_v$, then since $|V(C_u)| = |V(C_v)| \leq r$, we conclude that $\sigma \in \ind_r(G)$. If $C_v \neq C_u$, then take $S = V(C_u) \setminus \{u\}$ and $T = V(C_v) \setminus \{v\}$. Clearly, $(\sigma \setminus (V(C_u \cup C_v) )) \cap N[V(C_u \cup C_v)] = \emptyset$, which implies that $\sigma = V(C_u \cup C_v) \cup \tau = \{u,v\} \sqcup (S \cup T) \sqcup \tau $ for some $\tau \in \ind_r(G - N[V(C_u \cup C_v)]) = \ind_r(G- N[S \cup T \cup \{u, v\}])$. 
	
	
\item[($ii$)]For simplicity of notation, let
\begin{align*}  Z_1=& \bigcup\limits_{i+j \leq r-2} \bigcup\limits_{(S,T)\in \S_{i,j}^{u,v}} \bar{e} \ast (\overline{S \cup T})\ast \ind_r(G- N[S\cup T \cup \{u,v\}]) \mathrm{~and~} \\
Z_2 = & \bigcup\limits_{i,j \leq r-1} \bigsqcup\limits_{(S,T)\in \S_{i,j}^{u,v}} \bd(\bar{e}) \ast (\overline{S \cup T}) \ast \ind_r(G- N[S\cup T \cup \{u,v\}]). 
\end{align*}

Since $S \cap T = \emptyset,$  $ i +j \leq r-2$ implies that $|S \cup T| \leq r-2$.
Hence, $\bar{e}  \ast (\overline{S \cup T}) \in \ind_r(G) \cap (\bar{e} \ast \Delta_{u, v})$. Therefore, $Z_1 \subseteq \ind_r(G) \cap \bar{e} \ast \Delta_{u, v}$. Now, let $i,j \leq r-1$ and $(S,T)\in \S_{i,j}^{u,v}$. Since $G[S\cup T \cup \{u\}]=G[S\cup \{u\}] \sqcup G[T]$ and $G[S\cup T \cup \{v\}]=G[S] \sqcup G[T\cup \{v\}]$, we get that $\{u\}\sqcup S \sqcup T,~ \{v\}\sqcup S \sqcup T \in \ind_r(G)$. Therefore, we conclude that $Z_2 \subseteq \ind_r(G) \cap (\bar{e} \ast \Delta_{u, v})$. To show the other way inclusion, let $\sigma \in  \ind_r(G) \cap (\bar{e} \ast \Delta_{u, v})$. There exist $i, j \leq r-1$ and $(S, T) \in \S_{i,j}^{u,v}$ such that $\sigma \setminus  \{u,v\} \in \overline{S \cup T} \ast \ind_r(G - N[S \cup T \cup \{u, v\}])$. 
		
		{\bf Case 1.} $\{u, v\} \subseteq \sigma$. 
		
		 Write $\sigma  =  \{u, v\} \sqcup \tau \sqcup \gamma $, where $\tau = \sigma \cap (S \cup T)$ and $\gamma = \sigma \setminus (\{u, v\} \cup \tau ) \in \ind_r(G - N[S \cup T \cup \{u, v\}])$. Let $C$ be the connected component of $G [\sigma]$ containing $u$ (and hence $v \in C$). Observe that, $C \setminus \{u, v\} \subseteq \tau$. Now, let $S_1 = V(C) \cap S$ and $T_1 = V(C) \cap T$.  Clearly, $(S_1, T_1) \in \S_{i_1, j_1}^{u, v}$ for some $i_1 \leq i, j_1 \leq j$. Since $V(C)\subseteq \sigma \in \ind_r(G),~|V(C)| \leq r$ and therefore $i_1 + j_1 \leq r-2$. Further, since $C$ is a component,   $(\sigma \setminus V(C)) \cap N[V(C)] =\emptyset$ and therefore  $\sigma \setminus V(C) \in \ind_r(G- N[V(C)]) =
		\ind_r(G- N[S_1 \cup T_1 \cup \{u, v\}])$. Thus,  we conclude that  $\sigma \in Z_1$. 
		
		{\bf Case 2.} $\{u, v\} \not\subseteq \sigma$
		
	Since $\sigma \in \bar{e} \ast \Delta_{u, v}$ and $\{u, v\} \not\subseteq \sigma $, we get that $\sigma \in \bd(\bar{e}) \ast (\overline{S \cup T}) \ast \ind_r(G - N [S \cup T \cup \{u, v\}]) $ and therefore $\sigma \in Z_2$.
	\end{itemize}
	\end{proof}

	\begin{proposition}\label{prop:conditionforclaim4} 
	Let $t$ be a positive integer and  	for each $1 \leq l \leq t$, let $S_l$ and $T_l$ be supports of $u$ and $v$ respectively. For each $1 \leq l \leq t$,  let there exists $i_l, j_l \leq r-1$, such that $(S_l, T_l)\in \S_{i_l, j_l}^{u, v}$. Let $\mathcal{L} = (\bigcup\limits_{l=1}^t (S_l \cup T_l)) \setminus (\bigcup\limits_{l=1}^t (N[L_l]\setminus L_l))$, where $L_l = S_l \cup T_l \cup \{u,v\}$. Then $\bd(\bar{e}) \ast \overline{\mathcal{L}}  \subseteq \ind_r(G)$.
	\end{proposition}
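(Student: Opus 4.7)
The plan is to show that every face of $\bd(\bar{e})\ast\overline{\mathcal{L}}$ is $r$-independent in $G$. Such a face has the form $\tau\sqcup A$, where $\tau\in\{\emptyset,\{u\},\{v\}\}$ (since $\bd(\bar e)$ consists of the proper subsets of $\{u,v\}$) and $A\subseteq\mathcal{L}$. The observation recorded just before \Cref{lemma:union}, namely that $(G-e)[L_l]=G[S_l\cup\{u\}]\sqcup G[T_l\cup\{v\}]$, forces $u,v\notin S_l\cup T_l$ for every $l$, so $u,v\notin\mathcal{L}$ and the decomposition $\tau\sqcup A$ is unambiguous. I must verify that every connected component of $G[\tau\sqcup A]$ has at most $r$ vertices.

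The defining property of $\mathcal{L}$ can be read as: for every $w\in\mathcal{L}$ and every index $l$, either $w\in L_l$ or $w$ has no neighbour in $L_l$. Using this, I would prove by induction on $i$ the key claim that, for any walk $u=w_0,w_1,\ldots,w_k$ inside $G[\{u\}\cup A]$, each $w_i$ with $i\geq 1$ lies in $\bigcap_{l=1}^{t}S_l$. For the base case, $w_1\sim u\in L_l$ yields $w_1\in N[L_l]\cap\mathcal{L}$, and since $w_1\in A$ is distinct from $u$ and $v$ we obtain $w_1\in S_l\cup T_l$; if $w_1\in T_l$, then the edge $\{u,w_1\}\neq e$ (because $w_1\neq v$) survives in $G-e$ and connects the two pieces of the disjoint decomposition of $(G-e)[L_l]$, a contradiction. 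The inductive step is identical, applied with $w_{i-1}\in S_l$ in place of $u$, using that the edge $\{w_{i-1},w_i\}$ is never $e$ since neither endpoint equals $v$.

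From the claim, the component $C_u$ of $u$ in $G[\{u\}\cup A]$ satisfies $C_u\subseteq\{u\}\cup\bigcap_{l}S_l$, so $|C_u|\leq 1+|S_l|\leq r$ for any $l$. A symmetric argument, walking out from $v$ and exploiting the $T_l$-side of the decomposition of $(G-e)[L_l]$, bounds the $v$-component of $G[\{v\}\cup A]$ by $r$. For any component $C$ of $G[\tau\sqcup A]$ that contains neither $u$ nor $v$, pick $w_0\in C\subseteq\mathcal{L}$ and choose $l_0$ with $w_0\in S_{l_0}\cup T_{l_0}$; an induction along a walk inside $C$, completely analogous to the one above, shows that $C$ stays entirely inside whichever of $S_{l_0}$ or $T_{l_0}$ contains $w_0$, giving $|C|\leq r-1$. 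The main obstacle is purely bookkeeping: at each inductive step one has to certify that the edge being traversed is genuinely distinct from $e$ so that it persists in $G-e$, and this is exactly where the hypothesis $(S_l,T_l)\in\S_{i_l,j_l}^{u,v}$ and the fact that $A\cap\{u,v\}=\emptyset$ come into play.
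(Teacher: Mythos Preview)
Your argument is correct and follows the same line as the paper's: both show that every connected component of $G[\sigma]$ is trapped inside a single piece $S_l\cup\{u\}$ or $T_l\cup\{v\}$, using that $\sigma$ avoids $N[L_l]\setminus L_l$ and that the only $G$-edge between the two pieces of $L_l$ is $e$, which is absent since $\{u,v\}\not\subseteq\sigma$. Your walk-by-walk induction simply makes explicit what the paper asserts in one sentence (and incidentally yields the sharper containment $C_u\subseteq\{u\}\cup\bigcap_l S_l$), though you should say ``path'' rather than ``walk'' so that $w_i\neq u$ for $i\geq 1$.
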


	\begin{proof} Let $\sigma \in \text{Bd}(\bar{e})\ast \overline{\mathcal{L}}$. Observe that 
		for any $l$ and  $w \in N[L_l] \setminus L_l, w \notin \sigma$. Recall that, $(G-e)[L_l]= (G-e)[S_l\cup\{u\}] \sqcup (G-e)[T_l\cup \{v\}]=G[S_l\cup\{u\}] \sqcup G[T_l\cup \{v\}]$. Therefore, any connected component of $G[\sigma]$ must be a subset of either $S_i\cup\{u\}$ for some $1\leq i \leq t$ or $T_j \cup \{v\}$ for some $1\leq j \leq t$. Since $|S_i|, |T_j| \leq r-1$, the result follows.
	\end{proof}
	
		For $0\leq i,j \leq r-1$ and for any $S \in \supp_i(u,G)$ and $T \in \supp_j(v,G)$, let 
		\begin{align*}
		W_{S,T} & = \partial(\bar{e})\ast \overline{S\cup T} \ast \ind_r(G-N[S \cup T \cup \{u, v\}]) \mathrm{~and~} \\
		Y_{S,T} & = \bar{e}\ast \overline{S\cup T} \ast \ind_r(G-N[S \cup T \cup \{u, v\}]).
		\end{align*} 

		Using \Cref{lemma:union}$(ii)$, we can write $\ind_r(G) \cap  (\bar{e}\ast \Delta_{u, v}) = \bigcup\limits_{l} X_l$, where each $X_l$ is of the form either $W_{S, T}$ or $Y_{S, T}$ for some 
		$(S, T) \in \S_{i, j}^{u, v}$. We first understand the structure of arbitrary $t$-intersection of $X_i$'s, {\itshape i.e.}, of $X_{i_1} \cap \ldots \cap X_{i_t}$.  For each $1 \leq l \leq t$, there exists $(S_{l}, T_{l})$ such that either $X_{i_l} = W_{S_{l}, T_{l}}$ or $X_{i_l} = Y_{S_{l}, T_{l}}$.

		\begin{lemma}\label{lemma:generalintersection}
			For $1 \leq l \leq t$, let $L_{l}$ and $\mathcal{L}$ be as in \Cref{prop:conditionforclaim4}. Then $ \bigcap\limits_{l=1}^t X_{i_l}$ is either $ \bd(\bar{e}) \ast \ind_r(G-N[\bigcup\limits_{l=1}^t L_{l}])\ast \overline{\mathcal{L}}$ or a cone over $\overline{e}$.
		\end{lemma}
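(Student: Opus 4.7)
The plan is to split on whether at least one $X_{i_l}$ equals $W_{S_l,T_l}$, or all $X_{i_l}$ equal $Y_{S_l,T_l}$. In the latter situation each factor is $\bar{e}\ast Z_l$ with $Z_l = \overline{S_l\cup T_l}\ast \ind_r(G-N[L_l])$, and since the vertex set $\{u,v\}$ of $\bar{e}$ is disjoint from the vertex set of every $Z_l$, one checks directly that $\bigcap_l (\bar{e}\ast Z_l) = \bar{e}\ast \bigcap_l Z_l$; this contains $\bar{e}$ and is therefore a cone over $\bar{e}$.

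In the remaining case, the first step is to observe that, for each $l$, the three vertex sets $\{u,v\}$, $S_l\cup T_l$, and $V(G)\setminus N[L_l]$ are pairwise disjoint. The disjointness of $\{u,v\}$ from $S_l\cup T_l$ uses $v\notin S_l$ and $u\notin T_l$, which are both forced by $(G-e)[L_l]$ being disconnected, and the remaining pairs are immediate from $S_l\cup T_l\subseteq L_l\subseteq N[L_l]$. Consequently every $\sigma\in X_{i_l}$ admits a unique decomposition $\sigma=\sigma_0\sqcup A_l\sqcup B_l$, where $\sigma_0 = \sigma\cap\{u,v\}$, $A_l=\sigma\cap(S_l\cup T_l)$, and $B_l=\sigma\cap(V(G)\setminus N[L_l])\in \ind_r(G-N[L_l])$; moreover $\sigma_0\in\bd(\bar{e})$ is forced by the presence of at least one $W$-factor.

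For the forward inclusion, take $\sigma\in\bigcap_l X_{i_l}$ and consider any $w\in\sigma\setminus\{u,v\}$. If $w\notin N[L_l]$ for every $l$, then $w\in V(G)\setminus N[\bigcup_l L_l]$. Otherwise, the decomposition in each $l'$ forces the implication: whenever $w\in N[L_{l'}]$ we must have $w\in S_{l'}\cup T_{l'}$; so $w\in\bigcup_l(S_l\cup T_l)$ and $w\notin \bigcup_l(N[L_l]\setminus L_l)$, i.e., $w\in\mathcal{L}$. Thus $\sigma\setminus\{u,v\}$ splits as $\mu\sqcup\nu$ with $\mu\subseteq\mathcal{L}$ and $\nu\subseteq V(G)\setminus N[\bigcup_l L_l]$, and membership $\nu\in\ind_r(G-N[\bigcup_l L_l])$ follows by downward closure since $\nu$ is contained in each $B_l\in\ind_r(G-N[L_l])$.

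For the reverse inclusion, given $\sigma=\sigma_0\sqcup\mu\sqcup\nu$ with $\sigma_0\in\bd(\bar{e})$, $\mu\subseteq\mathcal{L}$, and $\nu\in\ind_r(G-N[\bigcup_l L_l])$, define $A_l=\mu\cap(S_l\cup T_l)$ and $B_l = (\mu\setminus(S_l\cup T_l))\sqcup\nu$; the containment $B_l\subseteq V(G)\setminus N[L_l]$ is direct because any $w\in\mu\setminus(S_l\cup T_l)\subseteq\mathcal{L}$ satisfies $w\notin L_l$, hence $w\notin N[L_l]$ by the defining property of $\mathcal{L}$. The main obstacle is verifying $B_l\in\ind_r(G-N[L_l])$. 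This is where \Cref{prop:conditionforclaim4} is essential: it gives $\mu\in \ind_r(G)$, so every component of $G[\mu\setminus(S_l\cup T_l)]\subseteq G[\mu]$ has at most $r$ vertices. Since $\nu\subseteq V(G)\setminus N[\bigcup_l L_l]$ has no edge to $\mu\subseteq\bigcup_l L_l$, the graph $G[B_l]$ decomposes as $G[\mu\setminus(S_l\cup T_l)]\sqcup G[\nu]$ with every component of size at most $r$, yielding $\sigma\in X_{i_l}$.
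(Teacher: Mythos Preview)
Your proof is correct and follows essentially the same route as the paper: split on whether every $X_{i_l}$ is of the $Y$-type (giving a cone over $\bar e$) or at least one is of the $W$-type (forcing $\sigma\cap\{u,v\}\in\bd(\bar e)$), then for the second case verify both inclusions by decomposing $\sigma$ into its $\{u,v\}$-part, its $\mathcal{L}$-part, and its part in $V(G)\setminus N[\bigcup_l L_l]$, using \Cref{prop:conditionforclaim4} for the reverse inclusion. In fact your case labelling is the right one; the paper's printed proof has $W$ and $Y$ interchanged in the first two sentences (a typo), since it is $Y_{S,T}=\bar e\ast(\cdots)$ that contains $\bar e$, not $W_{S,T}=\bd(\bar e)\ast(\cdots)$.
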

		
			\begin{proof} 
			If each $X_{i_l}$ is of the form $W_{S_l, T_l}$, then clearly since $X_{i_1} \cap \ldots \cap X_{i_t}$ is a cone over $\bar{e}$ and therefore it is contractible. So assume that there exists at least one $1 \leq l \leq t$ such that $X_{i_l} $  is of the form $Y_{S_l, T_l}$. In this case, we show the following.
			
			$$ \bigcap\limits_{l=1}^t X_{i_l}= \bd(\bar{e}) \ast \ind_r(G-N[\bigcup\limits_{l=1}^t L_{l}])\ast \overline{\mathcal{L}}.$$
			
			Let $\sigma \in \bigcap\limits_{l=1}^t X_{i_l}$. Define $\sigma = \tau_1 \sqcup \tau_2 \sqcup \tau_3$, where $\tau_1 = \sigma \cap \{u, v\}$, $\tau_2 = \sigma \cap V(G - N[\bigcup\limits_{l=1}^t L_{l}])$ and $\tau_3 = \sigma \setminus (\tau_1 \cup \tau_2)$. To prove that $\sigma \in \bd(\bar{e}) \ast \ind_r(G-N[\bigcup\limits_{l=1}^t L_{l}])\ast \overline{\mathcal{L}}$, it is enough to show that $\tau_3 \subseteq \mathcal{L} $. Let $L = \bigcup\limits_{l=1}^t L_{l}$. Observe that $\tau_3 \subseteq  N[L] \setminus \{u, v\}$. Suppose  there exists $w \in \tau_3$ such that $w \in  N[L] \setminus L$. Then there exist $x \in L$ such that $x \sim w$. 
			Firtly, if $x \in \{u, v\}$, then $w \in N[u, v] \setminus L$, which implies $w \in N[S_{1} \cup T_{1} \cup \{u, v\}] \setminus L \implies \{w\} \notin X_{i_1}$, a contradiction. Secondly, if $x \in \bigcup \limits_{l=1}^t (S_{l} \cup T_{l})$, then without loss of generality we assume that $x \in S_{1} \cup T_{1}$. Here, $w \in N[L_1] \setminus L \implies \{w\} \notin X_{i_1}$, again a contradiction. Therefore $\tau_3 \subseteq L\setminus \{u,v\} = \bigcup\limits_{l=1}^t (S_{l} \cup T_{l})$. We now show that 
			$\tau_3 \cap \bigcup\limits_{l=1}^t (N[L_{l}] \setminus L_{l})  = \emptyset$. Let $z \in \tau_3 \cap \bigcup\limits_{l=1}^t N[L_{l}] \setminus L_{l}$. If $z \in N[L_{l}] \setminus L_{l}$, then $\{z\} \notin X_{i_l}$, which is a contradiction. Hence, $\tau_3 \subseteq \mathcal{L}$.
			
			To show the other way inclusion, let $\sigma \in \bd(\bar{e}) \ast \ind_r(G-N[\bigcup\limits_{l=1}^t L_{l}])\ast \overline{\mathcal{L}}$. From \Cref{prop:conditionforclaim4}, $\sigma \in \ind_r(G)$. Write $\sigma=\sigma_1\sqcup \sigma_2 \sqcup \sigma_3$, where $\sigma_1 = \sigma \cap \{u,v\}$, $\sigma_2  = \sigma \cap V(G-N[\bigcup\limits_{l=1}^t L_{l}])$ and $\sigma_3 \subseteq \mathcal{L}$. Again, write $\sigma_3 =\sigma_3^1 \sqcup \sigma_3^2$, where $\sigma_3^1 = \sigma_3 \cap (S_{1}\cup T_{1})$ and $\sigma_3^2=\sigma_3\setminus \sigma_3^1$. Clearly, $\sigma_3^2 \subseteq \bigcup\limits_{l=2}^t (S_{l} \cup T_{l})$ and $\sigma_3^2 \cap N[L_{1}] = \emptyset$, which implies  that $\sigma_3^2 \in \ind_r(G-N[L_{1}])$.
			
			Since $\sigma_3^2 \subseteq \bigcup\limits_{l=2}^t (S_{l} \cup T_{l}) \setminus N[L_{1}]$ and $\sigma_2 \cap  N[\bigcup\limits_{l=1}^t L_{l}] =\emptyset$, we conclude that $\sigma_3^2 \cup \sigma_2 \in \ind_r(G-N[L_{1}])$. Thus, $\sigma$ can be written as follows: $\sigma= \sigma_1 \sqcup \sigma_2 \sqcup \sigma_3^1 \sqcup \sigma_3^2 = \sigma_1 \sqcup \sigma_3^1 \sqcup (\sigma_2 \sqcup\sigma_3^2) \in X_{i_1}$. By similar arguments, we see that $\sigma \in X_{i_l}$ for each $ 1\leq l\leq t$. 
		\end{proof}
		
		\begin{proposition} \label{prop8}(The Hurewicz Theorem)\\
 If a space $X$ is $(n - 1)$ connected, $n \geq 2$, then $\tilde{H_i} (X) = 0$ for $i < n$
and $\pi_n (X) \cong H_n (X)$.
\end{proposition}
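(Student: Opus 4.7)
The plan is to prove this classical statement by reducing it to a cellular computation on a well-chosen CW model for $X$. By CW approximation I would first replace $X$ with a homotopy equivalent CW complex, which preserves both homotopy and homology groups. Since $X$ is $(n-1)$-connected, I would then replace this model by a homotopy equivalent CW complex $X'$ having a single $0$-cell and no cells in dimensions $1,\dots,n-1$; this is carried out by an inductive construction that attaches a $(k+1)$-cell to kill each generator of $\pi_k$ for $k=1,\dots,n-1$, followed by the standard lemma that in a $(k-1)$-connected CW pair the lower-dimensional cells can be traded up. The cellular chain complex of $X'$ then has $C_i(X')=0$ for $0<i<n$, which immediately yields $\tilde{H}_i(X)=\tilde{H}_i(X')=0$ for all $i<n$.

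To identify $\pi_n(X)$ with $H_n(X)$, I would next analyze the $(n+1)$-skeleton of $X'$. The $n$-skeleton $X'^{(n)}$ is a wedge of $n$-spheres, one per $n$-cell, and since $n\geq 2$ it is simply connected. Using $\pi_n(\bS^n)=\Z$ (the identity class maps to the fundamental class under the Hurewicz homomorphism $h$) together with the fact that for $n\geq 2$ the group $\pi_n$ of a wedge of $n$-spheres splits as the direct sum of the individual $\pi_n(\bS^n)$'s, one sees that $h$ is an isomorphism for $X'^{(n)}$, with both sides free abelian on the $n$-cells.

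Finally, I would propagate the isomorphism through the remaining cell attachments. Attaching an $(n+1)$-cell along $\varphi\colon \bS^n\to X'^{(n)}$ quotients $\pi_n$ by the subgroup generated by $[\varphi]$ (normal generation collapses to ordinary generation since $\pi_n$ is already abelian for $n\geq 2$), while it quotients $H_n$ by the cellular boundary, which equals $h([\varphi])$; naturality of the Hurewicz map then yields an induced isomorphism $\pi_n(X'^{(n+1)})\to H_n(X'^{(n+1)})$. Cells of dimension $\geq n+2$ change neither $\pi_n$ (by cellular approximation) nor $H_n$ (directly from the cellular chain complex), so $\pi_n(X)\cong H_n(X)$. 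The main obstacle is the CW reduction step: building $X'$ with no cells in dimensions $1,\dots,n-1$ requires a delicate inductive argument that kills homotopy groups while preserving homotopy type, and one must carefully match the attaching-map data on the homotopy side with the cellular boundary on the homology side so that the naturality square for $h$ commutes as claimed.
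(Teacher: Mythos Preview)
The paper does not prove this proposition at all: it is stated as the classical Hurewicz Theorem and simply invoked as a black box in the proof of \Cref{thm:maindominationinside}. Your outline, by contrast, is an actual proof, and it follows the standard CW-model argument (essentially Hatcher, \emph{Algebraic Topology}, Theorem~4.32): CW-approximate, trade cells upward to obtain a model $X'$ with a single $0$-cell and no cells in dimensions $1,\dots,n-1$, read off $\tilde{H}_i=0$ for $i<n$ from the cellular chain complex, identify $\pi_n$ and $H_n$ on the $n$-skeleton (a wedge of $n$-spheres), and then track the effect of attaching $(n{+}1)$-cells on both sides via naturality of the Hurewicz map. The argument is correct as sketched. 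One small point worth tightening: the identification $\pi_n\bigl(\bigvee_\alpha \bS^n\bigr)\cong \bigoplus_\alpha \Z$ should be justified independently of the Hurewicz theorem to avoid circularity---e.g., by comparing the wedge with the product (whose homotopy fiber is $(2n-2)$-connected) and using $\pi_n(\bS^n)\cong\Z$ from degree theory. With that in place, your proposal is a complete and standard proof of a result the paper merely quotes.
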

	
	We are now ready to prove the main result of this section.

		\begin{theorem}\label{thm:maindominationinside}
		Let $G$ be a graph and $r \geq 1$. 
		\begin{itemize}
			\item[(i)] If $\gamma_r(G) > 2k$, then $\tilde{H}_{j}(\ind_r(G)) = 0$ for all $j \leq k+r-2$. 
			\item[(ii)] If $\omega_r(G) > 2k$, then $\tilde{H}_{j}(\ind_r(G)) = 0$ for all $j \leq k-1$. 
			
		\end{itemize}
	\end{theorem}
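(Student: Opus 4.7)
The plan is to prove both parts by induction on $|E(G)|$. The base case $|E(G)| = 0$ is immediate: $\ind_r(G)$ is then the full simplex on $V(G)$, hence contractible. For the inductive step, fix an edge $e = \{u, v\} \in E(G)$. Since distances can only increase under edge deletion, $\gamma_r(G - e) \geq \gamma_r(G) > 2k$ and $\omega_r(G - e) \geq \omega_r(G) > 2k$, so the inductive hypothesis applies to $G - e$ at the same value of $k$. By Lemma \ref{lemma:union}(i), $\ind_r(G - e) = \ind_r(G) \cup (\bar{e} \ast \Delta_{u, v})$, and the second piece is contractible (a cone). Setting $A = \ind_r(G)$ and $B = \bar{e} \ast \Delta_{u, v}$, the Mayer--Vietoris sequence together with $\tilde{H}_\ast(B) = 0$ reduces the problem to proving $\tilde{H}_j(A \cap B) = 0$ in the target range of degrees.

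Next, from Lemmas \ref{lemma:union}(ii) and \ref{lemma:generalintersection}, the intersection $A \cap B$ is covered by subcomplexes of types $W_{S,T}$ and $Y_{S,T}$, and every $t$-fold intersection of these is either contractible or homotopy equivalent to $\Sigma\, \ind_r(G - N[L])$, where $L = \bigcup_{l=1}^{t} L_l$ with $L_l = S_l \cup T_l \cup \{u, v\}$. The graph-theoretic heart of the argument is then the pair of estimates
\[
\gamma_r(G - N[L]) \geq \gamma_r(G) - 2 \qquad \text{and} \qquad \omega_r(G - N[L]) \geq \omega_r(G) - 2t.
\]
For the first, each $L_l$ splits in $G - e$ into two connected pieces $S_l \cup \{u\}$ and $T_l \cup \{v\}$ of size at most $r$, so every vertex of $N[L]$ lies within distance $1 + (r-1) = r$ of $\{u, v\}$ in $G$; thus adjoining $\{u, v\}$ to any distance-$r$ dominating set of $G - N[L]$ yields one of $G$. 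For the second, adjoining the $2t$ connected subgraphs $\{u\} \cup S_l$ and $\{v\} \cup T_l$ (each of size at most $r$) to a dominating $r$-collection of $G - N[L]$ produces one of $G$.

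Because $G - N[L]$ has strictly fewer edges than $G$, the inductive hypothesis applies to $G - N[L]$ with $k$ replaced by $k - 1$ for part (i) and by $k - t$ for part (ii). After one suspension, each non-contractible $t$-fold intersection has $\tilde{H}_j = 0$ for $j \leq k + r - 2$ in part (i) and for $j \leq k - t$ in part (ii), which meets the connectivity demand of Theorem \ref{thm:nerve} applied to the cover of $A \cap B$ (or equivalently, an iterated Mayer--Vietoris argument on the cover). This gives $\tilde{H}_j(A \cap B) = 0$ in the target range and closes the induction. The main obstacle I anticipate is the bookkeeping in part (ii): the scaling $\omega_r(G - N[L]) \geq \omega_r(G) - 2t$ matches the $(k - t + 1)$-connectivity required of $t$-fold intersections with no slack, so an off-by-one error anywhere in the suspension-plus-nerve chain would break the argument, and care is also needed to handle the case distinction in Lemma \ref{lemma:generalintersection} between the contractible intersections and the genuinely suspended ones.
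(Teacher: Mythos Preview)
Your proposal is correct and follows the same architecture as the paper: induction on $|E(G)|$, the decomposition from Lemma~\ref{lemma:union}, Mayer--Vietoris with the cone $\bar e\ast\Delta_{u,v}$, and then the nerve theorem (Theorem~\ref{thm:nerve}) applied to the cover of $A\cap B$ via Lemma~\ref{lemma:generalintersection}, with the passage from homology vanishing to homotopy connectivity handled exactly as you anticipate (simple connectivity of the suspension for $r\ge 2$, then Hurewicz). One genuine improvement over the paper: your bound $\gamma_r(G-N[L])\ge \gamma_r(G)-2$ in part~(i) is sharper than the paper's $\gamma_r(G-N[L])\ge \gamma_r(G)-2t$, since indeed every vertex of $N[L]$ lies within distance $r$ of $\{u,v\}$ (each $S_l\cup\{u\}$ and $T_l\cup\{v\}$ being connected of size at most $r$); this makes the $t$-fold intersection estimate in part~(i) uniform in $t$ and cleaner than the paper's argument.
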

	
	\begin{proof}
	If $r = 1$, then since		
 $\gamma_1(G) = \omega_1(G) = \Gamma_0(G)$, result follows from \Cref{thm:meshulam}$(i)$. So, assume that $r \geq2$. 
 	From \Cref{lemma:union}$(i)$ and using Mayer–Vietoris sequence, we have 
		
		\begin{align*}
		\cdots \to \tilde{H}_i(\ind_r(G) \cap (\bar{e} \ast \Delta_{u, v})) \to \tilde{H}_i(\ind_r(G) ) \oplus \tilde{H}_i(\bar{e} \ast \Delta_{u, v})
		\to \tilde{H}_i(\ind_r(G-e)) \to \cdots 
		\end{align*}
		
		Since $\bar{e} \ast \Delta_{u, v}$ is contractible, we get the following sequence.
		
		\begin{align}\label{MVsequence}
		\cdots \to \tilde{H}_i(\ind_r(G) \cap (\bar{e} \ast \Delta_{u, v})) \to \tilde{H}_i(\ind_r(G) ) \to \tilde{H}_i(\ind_r(G-e)) \to \cdots 
		\end{align}
		
	The remaining proof is now by induction on $k$ and the number of edges of $G$. Using \Cref{lemma:union}$(ii)$, we get that $\ind_1(G) \cap  (\bar{e}\ast \Delta_{u, v}) =  \bd(\bar{e}) \ast  \ind_r(G- N[\{u,v\}]) = \Sigma(\ind_r(G- N[\{u,v\}]))$

			

		\begin{itemize} \item[($i$)]
		
		Since $\ind_r(G)$ is always ($r-2$)-connected for any non empty graph,  base case of induction follows.  So, assume that $k \geq 1$.  Clearly, 
			$\gamma_r(G-e) \geq \gamma_r(G)$ and hence by induction $\tilde{H}_{k+r-2}(\ind_r(G-e)) = 0$
		
		\begin{claim}\label{claim:connectivity1}  $\ind_r(G) \cap  (\bar{e}\ast \Delta_{u, v})$ is   ($k+r-2$)-connected.
		\end{claim}
		\begin{proof}[Proof of \Cref{claim:connectivity1}] 	Using \Cref{lemma:union}$(ii)$, we can write $\ind_r(G) \cap  (\bar{e}\ast \Delta_{u, v}) = \bigcup\limits_{l} X_l$, where each $X_l$ is of the form either $W_{S, T}$ or $Y_{S, T}$ for some 
		$(S, T) \in \S_{i, j}^{u, v}$. Consider the intersection $X_{i_1} \cap \ldots \cap X_{i_t}$.  For each $1 \leq l \leq t$, there exists $(S_{i_l}, T_{i_l})$ such that either $X_{i_l} = W_{S_{i_l}, T_{i_l}}$ or $X_{i_l} = Y_{S_{i_l}, T_{i_l}}$.  
		Observe that  $\gamma_r(G-N[\bigcup\limits_{l=1}^t (S_{i_l} \cup T_{i_l} \cup \{u, v\})]) \geq \gamma_r(G) - 2t > 2(k-t).$ Hence, by  induction
			\begin{equation}\label{equation:conditionforintersectiontobesimplyconnected}
			\tilde{H}_{j} (\ind_r(G- N[\bigcup\limits_{l=1}^t (S_{i_l} \cup T_{i_l} \cup \{u, v\})])) = 0, \ \forall \ j \leq k-t+r+2. 
			\end{equation}
			
			Since $\gamma_r(G)>2$, $G-N[\bigcup\limits_{l=1}^t (S_{i_l} \cup T_{i_l} \cup \{u, v\})]$ is not an empty graph. Further, $r\geq 2$ implies that $\ind_r(G-N[\bigcup\limits_{l=1}^t (S_{i_l} \cup T_{i_l} \cup \{u, v\})])$ is path connected. Since the join of a path connected space with an non empty space is always simply connected, using  \Cref{lemma:generalintersection} we conclude that  $\bigcap\limits_{l=1}^t X_{i_l}$ is simply connected.  Hence, from equations \Cref{equation:conditionforintersectiontobesimplyconnected} and \Cref{lemma:generalintersection}, we get $\tilde{H}_j(\bigcap\limits_{l=1}^t X_{i_l}) = 0 \ \forall \ j \leq k+r-t-1$. Hence, $\bigcap\limits_{l=1}^t X_{i_l}$ is $(k+r-t-1)$-connected by \Cref{prop8}. It is easy to check that the nerve  $\mathbf{N}(\{X_{l}\})$ is a simplex and hence contractible. Therefore, result follows from \Cref{thm:nerve}.
		\end{proof}

			  Since $\tilde{H}_{j}(\ind_r(G-e)) = 0 \ \forall \ j \leq k+r-2$, from \Cref{claim:connectivity1} and Equation (\ref{MVsequence}), we get that $\tilde{H}_{j}(\ind_r(G)) = 0 \ \forall \ j \leq k+r-2$.

			\item[$(ii)$]  Again, the base case is straight forward. Since $\omega_r(G-e) \geq \omega_r(G)$, by induction $\tilde{H}_{j}(\ind_r(G-e)) = 0$ for all $j\leq k-1$.
			
			\begin{claim}\label{claim:connectivity2}
			$\ind_r(G) \cap  (\bar{e}\ast \Delta_{u, v})$ is  ($k-1$)-connected.
			\end{claim}
			\begin{proof}[Proof of \Cref{claim:connectivity2}]
		The proof here is similar to that of \Cref{claim:connectivity1}. Again, write  $\ind_r(G) \cap  (\bar{e}\ast \Delta_{u, v}) = \bigcup\limits_{l} X_l$ and  consider the intersection $X_{i_1} \cap \ldots \cap X_{i_t}$, where each $X_{i_l}$ is  either $W_{S_{i_l}, T_{i_l}}$ or $Y_{S_{i_l}, T_{i_l}}$.  
			 	For each $1 \leq l \leq t$, let $L_{i_l} = S_{i_l} \cup T_{i_l} \cup \{u, v\}$.	Observe that if $\mathcal{D}_r$ is a dominating $r$-collection (see \Cref{def:dominate}) for $G - N[\bigcup\limits_{l=1}^t L_{i_l}]$, then $\mathcal{D}_r \cup \{S_{i_l}\cup \{u\}, T_{i_l}\cup \{v\} : 1 \leq l \leq t\}$ is a dominating $r$-collection for $G$. Hence, $\omega_r(G-N[\bigcup\limits_{l=1}^t L_{i_l}]) \geq \omega_r(G) - 2t > 2(k-t).$
			Hence, by induction 
			\begin{equation}\label{equation:conditionfor intersectiontobesimplyconnected}
			\tilde{H}_{j} (\ind_r(G- N[\bigcup\limits_{l=1}^t L_{i_l}])) = 0 ~ \mathrm{ for~ all~} j \leq k-t-1. 
			\end{equation}
			
		 If  $G-N[\bigcup\limits_{l=1}^t L_{i_l}]$ is non empty graph, then  $r \geq 2$ implies that  $\ind_r(G-N[\bigcup\limits_{l=1}^t L_{i_l}])$ is path connected. Therefore, from \Cref{lemma:generalintersection}, $\bigcap\limits_{l=1}^t X_{i_l}$ is simply connected.  Hence, from \Cref{lemma:generalintersection}, \Cref{equation:conditionfor intersectiontobesimplyconnected} and \Cref{prop8}, we get that $(\bigcap\limits_{l=1}^t X_{i_l})$ is $(k-t)$-connected.
			
			If $G-N[\bigcup\limits_{l=1}^t L_{i_l}]$ is  empty then observe that 
			$\omega_r(G) \leq 2t$ and therefore $k-t < 0$. Since $\bd(\bar{e}) \subseteq \bigcap\limits_{l=1}^t X_{i_l}$, we see that  $\bigcap\limits_{l=1}^t X_{i_l}  \neq \emptyset$. Thus, we  conclude that  $\bigcap\limits_{l=1}^t X_{i_l}$ is $k-t \leq -1$ connected. 
			
			Since the nerve  $\mathbf{N}(\{X_{l}\})$ is contractible and arbitrary $t$ intersection $\bigcap\limits_{l=1}^{t} X_{i_l}$ is $k-t$ connected, \Cref{thm:nerve} implies that $\ind_r(G) \cap  (\bar{e}\ast \Delta_{u, v})$ is  ($k-1$)-connected.
			\end{proof}
			Since $\tilde{H}_{j}(\ind_r(G-e)) = 0 \ \forall \ j \leq k-1$, from \Cref{claim:connectivity2} and Equation (\ref{MVsequence}), we get that $\tilde{H}_{j}(\ind_r(G))=0$ for all $j \leq k-1$.
		\end{itemize}
		\vspace{-0.87 cm}
	\end{proof}

	\section{Chordal graphs} \label{sec:chordal}
	In this section, we study $r$-independence complexes of chordal graphs. However, first we present a general result that relates the $r$-independence complex of a graph $G$ with  $r$-independence complexes of its certain proper subgraphs (cf. \Cref{thm:main theorem for support}).
	
	\begin{lemma} \label{lem:support}
		Let $G$ be a graph, $v \in V(G)$ and let $\supp_r(v,G) = \{S_1, \ldots, S_n\}$. Then  $$\ind_r(G)=\st(v) \cup \bigcup\limits_{i=1}^n \st (S_i).$$
	\end{lemma}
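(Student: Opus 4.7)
The plan is to prove the two inclusions separately.

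For the easy direction $\st(v) \cup \bigcup_{i=1}^n \st(S_i) \subseteq \ind_r(G)$, I would just note that each star is, by definition, a subcomplex of $\ind_r(G)$, provided the relevant simplex is actually a face of $\ind_r(G)$: clearly $\{v\} \in \ind_r(G)$, and each $S_i$ has cardinality $r$ so every component of $G[S_i]$ has at most $r$ vertices, giving $S_i \in \ind_r(G)$ as well. Hence taking the union still lies inside $\ind_r(G)$.

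For the reverse inclusion, I would fix $\sigma \in \ind_r(G)$ and split into two cases according to whether $\sigma \cup \{v\}$ is still $r$-independent. In the first case $\sigma \cup \{v\} \in \ind_r(G)$, so by definition $\sigma \in \st(v)$ and we are done. In the second case, some connected component of $G[\sigma \cup \{v\}]$ has more than $r$ vertices; since $\sigma$ itself was $r$-independent, this offending component $C$ must contain $v$, and so $V(C) \setminus \{v\} \subseteq \sigma$ with $|V(C)| \geq r+1$. The key step is then to exhibit an $r$-support $S$ of $v$ that sits inside $\sigma$.

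The construction is by breadth-first search. Running BFS in $C$ starting at $v$, label the vertices $v = v_0, v_1, v_2, \ldots$ in the order they are discovered. Since $|V(C)| \geq r+1$ we can set $S = \{v_1, \ldots, v_r\}$; each $v_i$ with $1 \leq i \leq r$ has a BFS parent $p(v_i) \in \{v_0, \ldots, v_{i-1}\} \subseteq S \cup \{v\}$, so the edges $v_i \sim p(v_i)$ form a spanning tree of $S \cup \{v\}$ inside $G$. Thus $G[S \cup \{v\}]$ is connected and $S \in \supp_r(v,G)$, i.e.\ $S = S_i$ for some $i$. Since $S \subseteq V(C) \setminus \{v\} \subseteq \sigma$, we have $\sigma \cup S_i = \sigma \in \ind_r(G)$, which means exactly $\sigma \in \st(S_i)$.

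The only step that requires any real argument is the BFS construction giving an $r$-support contained in $\sigma$; the case split and the verifications of membership in $\ind_r(G)$ are immediate from the definitions. Edge cases such as $|V(G)| \leq r$ or $v$ being isolated fall into the first case (everything lies in $\st(v)$), so the statement holds vacuously on the $\supp_r(v,G)$ side.
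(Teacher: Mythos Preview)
Your proof is correct and follows essentially the same route as the paper's own argument: both directions of the inclusion are handled identically, with the reverse inclusion proved by locating an $r$-support of $v$ inside the large component of $G[\sigma\cup\{v\}]$. The only difference is cosmetic: where the paper simply asserts the existence of a connected $S\subset V(H)\setminus\{v\}$ of size $r$ with $G[S\cup\{v\}]$ connected, you supply an explicit BFS construction to produce it.
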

	
	\begin{proof}
		Since $\st(v),~ \st(S_i) \subseteq \ind_r(G)$ for all $i \in \{1,\dots,n\}$, $\st(v) \cup \bigcup\limits_{i}^n \st (S_i) \subseteq \ind_r(G)$. To show the other way inclusion, let $\sigma \in \ind_r(G)$. If $\sigma \notin \st(v)$, then $G[\sigma \cup \{v\}]$ has a connected component with at least $r+1$ vertices. Let $H$ be a such connected component of  $G[\sigma\cup \{v\}]$. Since $\sigma \in \ind_r(G)$, $v \in V(H)$. Choose a subset $S \subset V(H) \setminus \{v\}$,  such that $|S| = r$ and $G[S \cup \{v\}]$ is connected.  Then $S \in \mathrm{Supp}_r(v,G)$ and $\sigma \in \st(S)$.
	\end{proof}
	
	Recall that for  a vertex $v\in V(G)$, $\supp_r(v,G)$ is called connected if $G[S]$ is connected for all $S \in \supp_r(v,G)$.
	\begin{theorem}\label{thm:main theorem for support}
		Let $G$ be a connected graph, $v \in V(G)$ and $\mathrm{Supp}_r(v,G)=\{S_1,S_2,\dots,$ $S_n\}$. If $\supp_r(v,G)$ is connected and $N(v) \subseteq N[S_i]$ for each $i \in \{1,\dots,n\}$, then 
		$$\ind_r(G) \simeq {\bigvee\limits_{i=1}^n} \Sigma^r (\ind_r(G-N[S_i])) .$$
	\end{theorem}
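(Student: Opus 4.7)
The plan is to combine \Cref{lem:support}, which gives $\ind_r(G) = \st(v) \cup \bigcup_{i=1}^n \st(S_i)$, with the wedge decomposition of \Cref{lem:wedge} applied with $K_0 = \st(v)$ and $K_i = \st(S_i)$. For this I must (a) check that each $\st(S_i)$ and $\st(v)$ is contractible, (b) verify $\st(S_i) \cap \st(S_j) \subseteq \st(v)$ for $i \neq j$, and (c) identify $\st(v) \cap \st(S_i)$ up to homotopy.

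The first step underlying (a) and (c) is the clean description
$$\st(S_i) = \overline{S_i} \ast \ind_r(G - N[S_i]).$$
Given $\tau \in \st(S_i)$, split it as $(\tau \cap N[S_i]) \cup (\tau \setminus N[S_i])$. If some $x \in \tau \cap N[S_i]$ lay outside $S_i$, then $x$ would be adjacent to $S_i$ and would enlarge the $S_i$-component of $G[\tau \cup S_i]$ past $r$ vertices, forcing $\tau \cap N[S_i] \subseteq S_i$; the remaining part has no edge to $S_i$ and so lies in $\ind_r(G - N[S_i])$. This realizes $\st(S_i)$ as a cone, hence contractible, and $\st(v)$ is already a cone with apex $v$. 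Refining the same bookkeeping by additionally requiring that the $v$-component of $G[\tau \cup \{v\}]$ have size $\leq r$ forces $\tau \cap S_i$ to be a \emph{proper} subset of $S_i$, giving
$$\st(v) \cap \st(S_i) = \bd(\overline{S_i}) \ast \ind_r(G - N[S_i]) \simeq \bS^{r-2} \ast \ind_r(G - N[S_i]).$$

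The main obstacle is (b). Let $\tau \in \st(S_i) \cap \st(S_j)$; I must show $\tau \cup \{v\} \in \ind_r(G)$. Components of $G[\tau \cup \{v\}]$ not containing $v$ are already components of $G[\tau]$, hence of size $\leq r$. For the component through $v$, every neighbor $w$ of $v$ in $\tau$ satisfies $w \in N(v) \cap \tau \subseteq N[S_i] \cap N[S_j] \cap \tau \subseteq S_i \cap S_j$ by the descriptions from (a); and the $G[\tau]$-component through $w$ lies inside $S_i$ (no edges across $N[S_i]$) and symmetrically inside $S_j$, hence inside $S_i \cap S_j$. Since $i \neq j$ and $|S_i|=|S_j|=r$, we have $|S_i \cap S_j| \leq r-1$, so the $v$-component has at most $1 + (r-1) = r$ vertices. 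Both hypotheses of the theorem enter crucially here: connectedness of each $S_i$ makes the $\st(S_i)$-description work, and $N(v) \subseteq N[S_i]$ confines the neighbors of $v$ inside $\tau$ to the small overlap $S_i \cap S_j$.

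With (a), (b), (c) in hand, \Cref{lem:wedge} yields
$$\ind_r(G) \simeq \bigvee_{i=1}^n \Sigma\bigl(\st(v) \cap \st(S_i)\bigr) \simeq \bigvee_{i=1}^n \Sigma\bigl(\bS^{r-2} \ast \ind_r(G - N[S_i])\bigr),$$
and applying the suspension identity \eqref{remark:join of space with spheres is suspension} turns $\Sigma \circ \Sigma^{r-1}$ into $\Sigma^r$, producing the claimed wedge of $r$-fold suspensions.
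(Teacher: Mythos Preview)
Your proof is correct and follows essentially the same approach as the paper: both cover $\ind_r(G)$ by $\st(v)$ and the $\st(S_i)$ via \Cref{lem:support}, verify the pairwise intersection condition $\st(S_i)\cap\st(S_j)\subseteq\st(v)$, identify $\st(v)\cap\st(S_i)$ with $\bd(\overline{S_i})\ast\ind_r(G-N[S_i])$, and finish with \Cref{lem:wedge} and \eqref{remark:join of space with spheres is suspension}. Your explicit identification $\st(S_i)=\overline{S_i}\ast\ind_r(G-N[S_i])$ and your direct bound on the $v$-component (containing it in $\{v\}\cup(S_i\cap S_j)$) are slightly cleaner than the paper's contradiction argument via a minimal path, but the structure and key ideas are the same.
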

	
	\begin{proof}
		
		From Lemma \ref{lem:support},  $\ind_r(G)=\st(v) \cup \bigcup\limits_{i}^n \st (S_i).$
		
		\begin{claim}\label{claim:main theorem claim 1}
			For all $1 \leq i < j \leq n$, 	$\st(S_i) \cap \st(S_j) \subseteq \st(v)$. 
		\end{claim}
		\begin{proof}[Proof of Claim \ref{claim:main theorem claim 1}]
			
			Fix $i \neq j$ and let  $\sigma \in \st(S_i) \cap \st(S_j)$. Clearly, $v \notin \sigma$. Since $\supp_r(v,G)$ is connected, we see that $\sigma \cap (N[S_i] \setminus S_i) = \emptyset$ and $\sigma \cap (N[S_j] \setminus S_j) = \emptyset$. Hence, $N(v) \subseteq N[S_i] \cap N[S_j]$ implies that $\sigma \cap N(v) \subseteq S_i \cap S_j$.
			Suppose $\sigma \notin \st(v)$.  Let $\tau = \sigma \cap N(v)$ and $H$ be a connected component of $G[\sigma \cup \{v\}]$ of cardinality  at least $r+1$. Clearly, $(V(H) \setminus \{v\}) \nsubseteq  (S_i\cap S_j)$ (since $|S_i\cap S_j| <r$). Let $w \in (V(H)\setminus \{v\}) \setminus (S_i\cap S_j)$ and $P$ be a path of minimal length from $w$ to $v$ in $H$. Without loss of generality, we can assume that $w\notin S_i$. Note that, $G[S_i]$ and $G[(V(P)\setminus \{v\})]$ are connected subgraphs. Further, $N(v) \subseteq N[S_i]$ implies that   $N[S_i] \cap (V(P)\setminus \{v\}) \neq \emptyset$. Therefore, $G[S_i \cup (V(P)\setminus \{v\})]$ is connected subgraph of cardinality more than $r$. Hence, $V(P)\setminus \{v\} \notin \mathrm{st}(S_i)$ implying that $\sigma \notin \mathrm{st}(S_i)$. Which is a contradiction to our assumption that $\sigma \in \st(S_i) \cap \st(S_j)$.
		\end{proof}
		
		For each $1 \leq i \leq n$, let $\Delta^{S_i}$ be the simplex on vertex set $S_i$.
		\begin{claim}\label{claim:main theorem claim2}
			For each $i \in \{1,\dots,n\}$, $\st(S_i) \cap \st(v) = \ind_r(G-N[S_i]) \ast \bd(\Delta^{S_i}).$
		\end{claim}
		
		\begin{proof}[Proof of Claim \ref{claim:main theorem claim2}] 
			%
			Let $\tau = \sigma \sqcup \delta$, where $\sigma \in \ind_r(G - N[S_i])$ and $\delta \subsetneq S_i $. Since $\sigma \cap N[S_i] = \emptyset$ and $N(v) \subseteq N[S_i]$, we get that $\tau \in \st(S_i)$ and $v \notin N(\sigma) $. Therefore, $N(\sigma) \cap (\{v\} \cup \delta) = \emptyset $ which implies that $\tau \in \st(v)$. Hence, $\ind_r(G-N[S_i]) \ast \bd(\Delta^{S_i}) \subseteq  \st(S_i) \cap \st(v)$. 
			
			To show the other way inclusion, let $\sigma \in \st(S_i) \cap \st(v)$. Since $\sigma \in \st(v)$ and $S_i$ is an $r$-support, we see that $S_i \nsubseteq \sigma$. Further, $\sigma \in \st(S_i)$ implies that $\sigma \cap (N[S_i] \setminus S_i) = \emptyset$. Let  $\tau = \sigma \cap (V(G) \setminus N[S_i])$ and $\delta = \sigma \cap S_i$. Then $\sigma = \tau \sqcup \delta$ and therefore $\sigma \in \ind_r(G-N[S_i]) \ast \bd(\Delta^{S_i})$. 
		\end{proof} 
		
		Observe that  $\ind_r(G-N[S_i]) \ast \mathrm{Bd}(\Delta^{S_i})\simeq \Sigma^{r-1}(\ind_r(G-N[S_i]))$ and $\st(v)$,  $\st(S_i)$ is contractible. Therefore, \Cref{thm:main theorem for support} follows from \Cref{claim:main theorem claim 1}, \Cref{claim:main theorem claim2}, \Cref{lem:wedge}, and \Cref{lem:support}.
	\end{proof}
	
	For  $n \geq 3$, a {\itshape wheel graph}, denoted $W_n$, is a graph constructed from the cycle graph $C_n$ by adding a new vertex $w$ and an edge $(w, v)$ for each vertex $v \in V(C_n)$. As an immediate consequence of \Cref{thm:main theorem for support} we get the following results.
	\begin{cor}
		For $n\geq 3$,	$\ind_{n-1}(W_n) \simeq \bigvee\limits_{n} \mathbb{S}^{n-2}$.
	\end{cor}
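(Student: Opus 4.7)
The plan is to apply \Cref{thm:main theorem for support} directly with $v = w$, the hub vertex of the wheel graph $W_n$. Let $V(C_n) = \{v_1, \ldots, v_n\}$. Since $w$ is adjacent to every cycle vertex, for any $S \subseteq V(C_n)$ the induced subgraph $W_n[S \cup \{w\}]$ is automatically connected. Hence $\supp_{n-1}(w, W_n)$ consists exactly of the $n$ sets $S_i := V(C_n) \setminus \{v_i\}$ obtained by deleting a single cycle vertex.

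Next, I would verify the two hypotheses of \Cref{thm:main theorem for support}. First, each $W_n[S_i]$ is a cycle with one vertex removed, hence a path on $n-1$ vertices, so $\supp_{n-1}(w, W_n)$ is connected in the sense required. Second, $N(w) = V(C_n)$, and since the removed vertex $v_i$ has both of its cycle-neighbors inside $S_i$, we have $v_i \in N(S_i)$, giving $V(C_n) \subseteq N[S_i]$.

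Now I compute $W_n - N[S_i]$. Because $w \in N(S_i)$ and $v_i \in N(S_i)$, we get $N[S_i] = V(W_n)$, so each $W_n - N[S_i]$ is the empty graph and $\ind_{n-1}(W_n - N[S_i])$ is empty. Using the paper's convention that $\Sigma(\emptyset) \simeq \mathbb{S}^0$ (and hence $\Sigma^{n-1}(\emptyset) \simeq \mathbb{S}^{n-2}$), \Cref{thm:main theorem for support} gives
\[
\ind_{n-1}(W_n) \;\simeq\; \bigvee_{i=1}^{n} \Sigma^{n-1}\bigl(\ind_{n-1}(W_n - N[S_i])\bigr) \;\simeq\; \bigvee_{n} \mathbb{S}^{n-2},
\]
as desired.

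There is no real obstacle here; the only subtlety is keeping track of the empty-graph convention for the suspension so that the exponent on the sphere comes out to $n-2$ rather than $n-1$. Every other hypothesis of \Cref{thm:main theorem for support} is immediate from the highly symmetric structure of the wheel.
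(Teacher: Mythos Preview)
Your proof is correct and follows essentially the same approach as the paper's own argument: apply \Cref{thm:main theorem for support} at the hub $w$, observe that $\supp_{n-1}(w,W_n)$ consists of the $n$ sets $V(C_n)\setminus\{v_i\}$, check that each support is connected with $N(w)\subseteq N[S_i]$, and note $W_n-N[S_i]=\emptyset$ so that each wedge summand is $\Sigma^{n-1}(\emptyset)\simeq\mathbb{S}^{n-2}$. You have simply spelled out the verifications that the paper leaves implicit.
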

	\begin{proof}
		It is easy to observe that $\supp_{n-1}(w,W_n)$ is connected, $|\supp_{n-1}(w,W_n)|=n$ and $W_n-N[S] =\emptyset$ for each $S \in \supp_{n-1}(w,W_n)$. Therefore, $\ind_{n-1}(W_n) \simeq \bigvee\limits_{n} \Sigma^{n-1} (\emptyset) = \bigvee\limits_{n} \mathbb{S}^{n-2}$.
	\end{proof}
	
	For a vertex $v$ of  graph $G$, let $N_2(v)$ denotes the set of vertices of $G$ whose distance from $v$ is exactly $2$, {\itshape i.e.}, $N_2(v)= \{w \in V(G) : d(v,w)=2\}$. 
	\begin{cor}\label{cor:notchordalsuspension}
		Let $v\in V(G)$ such that $N(v) \times N_2(v) \subseteq E(G)$. If $r > |N(v)|$, then $\supp_r(v)$ is connected. In particular, $\tilde{H}_i(\ind_r(G))=0$ for all $i<r-1$ and $\tilde{H}_j(\ind_r(G))$ is torsion-free for $j=r-1,r$.
	\end{cor}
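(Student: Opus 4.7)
The plan is to verify the hypotheses of \Cref{thm:main theorem for support} at $v$ and then read off the homology from the resulting wedge-of-suspensions. Fix $S\in\supp_r(v,G)$; the crucial preliminary is to show $S\cap N_2(v)\neq\emptyset$. Since $|S|=r>|N(v)|$, pick $t\in S\setminus N(v)$ and, in the connected graph $G[S\cup\{v\}]$, take a path $v=y_0,y_1,\ldots,y_m=t$. Let $i$ be the smallest index $\ge 2$ with $y_i\notin N(v)$ (such $i$ exists because $y_m=t\notin N(v)$). Then $y_{i-1}\in N(v)\cap S$ is adjacent to $y_i$, so $d_G(v,y_i)\le 2$, and since $y_i\notin N(v)\cup\{v\}$ we get $y_i\in N_2(v)\cap S$.

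With some $w\in S\cap N_2(v)$ in hand, two consequences follow. First, suppose $G[S]$ is disconnected, with components $C_1,\ldots,C_k$, $k\ge 2$. Each $C_j$ must meet $N(v)$: otherwise no vertex of $C_j$ would be adjacent to $v$ and $C_j$ would be disconnected from $v$ in $G[S\cup\{v\}]$. Picking $u_1\in N(v)\cap C_1$ and $u_2\in N(v)\cap C_2$, the hypothesis $N(v)\times N_2(v)\subseteq E(G)$ gives $wu_1,wu_2\in E(G)$, forcing $w$ into both $C_1$ and $C_2$, a contradiction. Thus $G[S]$ is connected and $\supp_r(v)$ is connected. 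Second, any $u\in N(v)$ is adjacent to $w\in S$ by the same hypothesis, hence $N(v)\subseteq N[S]$. Applying \Cref{thm:main theorem for support} (to the component of $v$, and appealing to the join decomposition across other components if $G$ is disconnected) yields
\[\ind_r(G)\simeq \bigvee_{i=1}^{n}\Sigma^r\bigl(\ind_r(G-N[S_i])\bigr).\]

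The homology statement follows from the isomorphism $\tilde H_j(\Sigma^r X)\cong \tilde H_{j-r}(X)$. For $j<r-1$ the shifted index is less than $-1$ so every summand vanishes; for $j=r-1$ the summands are $\tilde H_{-1}$'s, each $0$ or $\mathbb{Z}$, hence torsion-free; for $j=r$ the summands are $\tilde H_0$'s, which are free abelian and hence torsion-free. The main obstacle is the first paragraph: one must simultaneously locate a distance-$2$ vertex inside $S$ and exploit it to force connectedness of $G[S]$. The pigeonhole bound $r>|N(v)|$ plants a non-neighbor of $v$ in $S$ so that a length-$\ge 2$ internal path can be extracted, while the bipartite-completeness hypothesis $N(v)\times N_2(v)\subseteq E(G)$ glues all potential components together through that distance-$2$ witness.
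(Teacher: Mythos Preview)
The paper states this corollary without proof, so there is no authors' argument to compare against. Your proof is correct and supplies exactly the details one would expect: you verify both hypotheses of \Cref{thm:main theorem for support} (connectedness of every $G[S]$ and $N(v)\subseteq N[S]$) by first locating a witness $w\in S\cap N_2(v)$ via a shortest-path/pigeonhole argument, and then using the complete-bipartite hypothesis $N(v)\times N_2(v)\subseteq E(G)$ to glue components and to dominate $N(v)$. The homology read-off from $\tilde H_j(\Sigma^r X)\cong\tilde H_{j-r}(X)$ is standard and correctly handles the cases $j<r-1$, $j=r-1$, and $j=r$.

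Two minor remarks. First, the vanishing $\tilde H_i(\ind_r(G))=0$ for $i<r-1$ actually holds for \emph{every} nonempty graph, since any subset of at most $r$ vertices is $r$-independent and hence $\ind_r(G)$ contains the full $(r-1)$-skeleton of the simplex on $V(G)$; the hypotheses on $v$ are only needed for the torsion-freeness in degrees $r-1$ and $r$. Second, your parenthetical about passing to the component of $v$ when $G$ is disconnected is correct but could be made precise: if $G=H\sqcup H'$ with $v\in H$, then $\ind_r(G)\simeq\ind_r(H)\ast\ind_r(H')$, and since $\ind_r(H)\simeq\bigvee_i\Sigma^r(X_i)$ one gets $\ind_r(G)\simeq\bigvee_i\Sigma^r(X_i\ast\ind_r(H'))$ by associativity of join and distributivity of join over wedge, so the same degree-shift argument applies.
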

	
	
	A vertex $v$ of a graph $G$ is called {\it simplicial}, if the induced subgraph $G[N(v)]$ is a complete graph. It is a  classical result of Dirac \cite{dirac} that every  chordal graph has a simplicial vertex. For a simplicial vertex $v$ and $S\in \supp_r(v,G)$, since each connected component of $G[S]$ has a vertex from $N(v)$, we get the following.
	
	\begin{remark}\label{lemma:eachsupportconnected}
		If $v\in V(G)$ is a simplicial vertex, then $\supp_r(v,G)$ is connected and $N(v)\subseteq N[S]$ for each $S \in \supp_r(v,G)$.
	\end{remark}
	
	It is easy to see that, $\supp_1(v,G)$ is always connected and $\supp_2(v,G)$ is connected if and only if $v$ is a simplicial vertex. 
	When $r>2$, we can construct examples satisfying the condition given in \Cref{cor:notchordalsuspension} such that $v$ is not simplicial yet $\supp_r(v,G)$ is connected. Thus, the condition being $v$ simplicial is sufficient but not necessary for $\supp_r(v,G)$ to be connected. 
	
	We now compute the homotopy type of $r$-independence complexes of chordal graphs.
	
		\begin{theorem} \label{theorem:lowerboundconnectivityinside} Let $G$ be a chordal graph and $r \geq 1$.
		\begin{itemize}
			\item[(i)] $\ind_r(G)$ is either contractible or homotopy equivalent to wedge of spheres.
			

			\item[(ii)] If  $\omega_r(G) > k$, then $\tilde{H}_{i}(\ind_r(G)) = 0$ for each $i \leq rk-1$.
			\item[(iii)] If $\ind_r(G)\simeq  \bigvee \mathbb{S}^{i_k}$, then for each $i_k$ there exists a positive integer $s_k $ such that $i_k = rs_k-1$.
		\end{itemize}
	\end{theorem}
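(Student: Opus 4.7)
The plan is to prove all three statements simultaneously by induction on $|V(G)|$, exploiting the fact (Dirac) that every chordal graph has a simplicial vertex. Fix a simplicial vertex $v\in V(G)$. If $\supp_r(v,G)=\emptyset$, then \Cref{lem:support} yields $\ind_r(G)=\st(v)$, a cone, hence contractible, and all three statements hold trivially. Otherwise, \Cref{lemma:eachsupportconnected} ensures the hypotheses of \Cref{thm:main theorem for support} are met, giving the homotopy equivalence
\[
\ind_r(G) \;\simeq\; \bigvee_{i=1}^{n} \Sigma^r\bigl(\ind_r(G-N[S_i])\bigr),
\]
where $\supp_r(v,G)=\{S_1,\dots,S_n\}$. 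Since $v\in N[S_i]$ for each $i$, the graphs $G-N[S_i]$ are chordal induced subgraphs on strictly fewer vertices, so the inductive hypothesis applies to each of them.

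For (i) and (iii), I would apply the inductive hypothesis to each factor $\ind_r(G-N[S_i])$: it is either contractible or homotopy equivalent to a wedge of spheres, all of dimensions of the form $rs-1$ for positive integers $s$. In the degenerate case $V(G-N[S_i])=\emptyset$, the complex is empty and $\Sigma^r(\emptyset)\simeq \mathbb{S}^{r-1}=\mathbb{S}^{r\cdot 1-1}$, which already has the required form. Since $\Sigma^r$ preserves contractibility and sends $\bigvee_k \mathbb{S}^{rs_k-1}$ to $\bigvee_k \mathbb{S}^{r(s_k+1)-1}$, and since the outer wedge absorbs contractible summands, $\ind_r(G)$ is either contractible or a wedge of spheres of the stipulated dimensions, proving (i) and (iii).

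For (ii), the key observation is that if $\mathcal{D}$ is a dominating $r$-collection of $G-N[S_i]$, then $\mathcal{D}\cup\{S_i\}$ is a dominating $r$-collection of $G$, since $S_i$ is a connected subgraph of order at most $r$ that dominates $N[S_i]\ni v$. Hence $\omega_r(G-N[S_i])\geq \omega_r(G)-1 > k-1$, and by induction $\tilde{H}_j(\ind_r(G-N[S_i]))=0$ for $j\leq r(k-1)-1$. Since $r$-fold suspension shifts this vanishing range by $r$, we obtain $\tilde{H}_j\bigl(\Sigma^r(\ind_r(G-N[S_i]))\bigr)=0$ for $j\leq rk-1$, and this passes through the wedge. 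The main subtlety lies in correctly handling the degenerate cases: when the recursion bottoms out at $V(G-N[S_i])=\emptyset$ (whose $r$-suspension is an explicit sphere rather than something obtained from the inductive hypothesis), and the small base cases such as $k=0$ in (ii), where the claim reduces to nonemptiness of $\ind_r(G)$.
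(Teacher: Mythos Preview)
Your proposal is correct and follows essentially the same route as the paper: pick a simplicial vertex via Dirac, invoke \Cref{lemma:eachsupportconnected} to satisfy the hypotheses of \Cref{thm:main theorem for support}, obtain the wedge-of-suspensions decomposition, and induct on $|V(G)|$; for (ii) you use the same key inequality $\omega_r(G-N[S_i])\ge\omega_r(G)-1$ coming from the observation that adjoining $S_i$ to any dominating $r$-collection of $G-N[S_i]$ yields one for $G$. The only cosmetic difference is that the paper treats (i), (ii), (iii) in separate paragraphs while you run the induction simultaneously, and the paper phrases (ii) by bounding the sphere dimensions in the wedge rather than the homology vanishing range directly; these are equivalent.
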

	\begin{proof} Let $v$ be a simplicial vertex of $G$ and let $\supp_r(v, G) = \{S_1, \ldots, S_n\}$. 
		\begin{itemize}
			\item[$(i)$] From \Cref{lemma:eachsupportconnected} and  \Cref{thm:main theorem for support}, we have 
			$$\ind_r(G) \simeq {\bigvee\limits_{i=1}^n} \Sigma^r (\ind_r(G-N[S_i])) .$$
			Hence, the proof follows from induction on number of vertices of graph and the fact that induced subgraph of a chordal graph is also chordal.

			\item[$(ii)$]  \Cref{lemma:eachsupportconnected} implies that $G[S_i]$ is connected for each $i \in \{1,\dots,n\}$ and using \Cref{thm:main theorem for support}, we get $\ind_r(G) \simeq {\bigvee\limits_{i=1}^n} \Sigma^r (\ind_r(G-N[S_i])) .$ If each $\ind_r(G-N[S_i])$ is contractible, then so is $\ind_r(G)$. Suppose there exists $i$ such that
			$\ind_r(G-N[S_i])$ is not contractible. Let $\ind_r(G-N[S_i]) \simeq \bigvee \mathbb{S}^{i_t}$. Then $\Sigma^{r}(\ind_r(G-N[S_i])) \simeq \bigvee \mathbb{S}^{i_t+r}$. For each dominating $r$-collection $D_r$ of $G-N[S_i]$, the set 
			$D_r \cup \{S_i\}$ is also a dominating $r$-collection of $G$. This implies $\omega_r(G) \leq \omega_r(G-N[S_i]) +1$. By induction, we have $i_t \geq r \omega_r(G-N[S_i]) -1$. Hence, $i_t+r \geq r(\omega_r(G)-1)-1+r = r\omega_r(G)-1$ for each $i_t$.
			
			\item[$(iii)$] If $|V(G)| \leq r$, then $\ind_r(G)$ is contractible and result follows trivially. Now, let  $|V(G)| \geq r+1$ and $v$ be a simplicial vertex of $G$. If $\supp_r(v, G) = \emptyset$, then the connected component say $H_v$ of $G$ containing $v$ is of cardinality at most $r$. In this case $\ind_r(G) \simeq \ind_r(G-H_v) \ast \ind_r(H_v)$. Since, $\ind_r(H_v)$ is contractible, $\ind_r(G)$ is also contractible. So, assume that $\supp_r(v, G) \neq \emptyset$.
			Using Theorem \ref{thm:main theorem for support}, we get $$\ind_r(G) \simeq \bigvee\limits_{S \in \supp_r(v,G)} \Sigma^r (\ind_r(G-N[S])).$$ 
			By induction, if $\ind_r(G-N[S]) \simeq \bigvee \mathbb{S}^{i_t}$ then for each $i_t$ there  exists an $s_t \in \mathbb{N}$ such that   $i_t = rs_t-1$. Hence, we get that $i_t+r = r(s_t+1)-1$.
		\end{itemize}
		\vspace{-0.75cm}
	\end{proof}
	
The following result is the converse part of \Cref{theorem:lowerboundconnectivityinside} $(iii)$.
	
	\begin{theorem}\label{thm:reverseconstructioninside}
		Let $r \geq 2$. Let $(d_1, \ldots, d_n)$ and $(k_1, \ldots, k_n)$ be two sequences of positive integers. There exists a chordal graph $G$ such that $\ind_r(G) \simeq \bigvee\limits_{i=1}^n \vee_{d_i} \mathbb{S}^{rk_i-1}$. 
	\end{theorem}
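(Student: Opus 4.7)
My plan is to prove this by induction on $N=\sum_i d_i$, the total number of sphere summands, with the inductive step built around Theorem~\ref{thm:main theorem for support} applied at a carefully placed simplicial vertex.

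For the base case $N=1$, where the target is a single sphere $\mathbb{S}^{rk-1}$, I take $G$ to be the disjoint union of $k$ copies of $K_{r+1}$. This graph is chordal. Using $\ind_r(K_{r+1})\simeq \partial\Delta^r\simeq \mathbb{S}^{r-1}$ together with the general identity $\ind_r(G_1\sqcup G_2)\simeq \ind_r(G_1)\ast \ind_r(G_2)$ and the join formula $\mathbb{S}^a\ast \mathbb{S}^b\simeq \mathbb{S}^{a+b+1}$, one inductively obtains $\ind_r(G)\simeq \mathbb{S}^{rk-1}$.

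For the inductive step, my plan is to build $G$ around a central simplicial vertex $v$ whose supports will cut out the desired wedge. Concretely, let $v$ be adjacent to a clique $\{a_1,\ldots,a_r\}$ (so $\{v\}\cup N(v)$ is $K_{r+1}$, making $v$ simplicial), and enumerate the summands of the target wedge as $\mathbb{S}^{rk_1-1},\ldots,\mathbb{S}^{rk_N-1}$. For each $l\in[N]$, attach a gadget consisting of a ``pendant'' vertex $c_l$ adjacent to a prescribed subset of $\{a_1,\ldots,a_r\}$, together with a ``tail'' of $k_l-1$ vertex-disjoint copies of $K_{r+1}$ (empty when $k_l=1$) attached to $c_l$ via a clique so that the resulting graph is chordal. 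The attachments are chosen so that (i) $v$ has a distinguished support $S_l$ with $G-N[S_l]$ equal to precisely the tail of the $l$-th gadget, hence
\[\Sigma^r(\ind_r(G-N[S_l]))\simeq \Sigma^r(\mathbb{S}^{r(k_l-1)-1})\simeq \mathbb{S}^{rk_l-1},\]
and (ii) every other (``parasitic'') support $S$ of $v$ has $\ind_r(G-N[S])$ contractible. Theorem~\ref{thm:main theorem for support} then yields $\ind_r(G)\simeq \bigvee_l \mathbb{S}^{rk_l-1}$, and a check via the inductive hypothesis (or direct verification) completes the step.

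The main technical obstacle is arranging (i) and (ii) simultaneously: the supports of a simplicial vertex tend to proliferate, since one may swap any $a_i$ for a pendant $c_l$ or combine pendants across gadgets, and each such combination is a potential parasitic support. The construction must rule out sphere contributions from all of them, e.g., by ensuring $G-N[S]$ has at most $r$ vertices in each connected component (so $\ind_r(G-N[S])$ is a full simplex), which typically forces inserting small ``absorber'' subgraphs tied to each $a_i$. Maintaining chordality while doing so requires that every attachment be made through a clique so that no induced cycle of length $\geq 4$ is created. The verification will be a case analysis over the support types of $v$, in which one computes $N[S]$ and identifies $G-N[S]$ explicitly, confirming it is either the desired tail or has contractible $\ind_r$.
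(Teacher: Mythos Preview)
Your base case is correct and clean. The difficulty is entirely in the inductive step, and there the proposal has a genuine gap.

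First, note that your ``induction'' is not really inductive: in the step you build the entire graph for all $N$ summands at once and never invoke the hypothesis for $N-1$. So the scaffolding can be dropped; what matters is whether your single construction works.

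The real problem is the one you yourself flag as the ``main technical obstacle'': controlling the parasitic supports. With $v$ adjacent to a full clique $\{a_1,\ldots,a_r\}$ and pendants $c_l$ attached to various subsets, the set $\supp_r(v,G)$ is enormous. Every $r$-subset of $\{a_1,\ldots,a_r,c_1,\ldots,c_N\}$ containing at least one $a_i$ and inducing a connected subgraph is a support, and once you add tails the supports can reach into those as well. You assert that for every parasitic $S$ the complex $\ind_r(G-N[S])$ will be contractible, but you give no mechanism for this; the suggestion of ``absorber'' subgraphs is not specified, and inserting extra gadgets to kill some parasites typically creates new supports of their own. Without an explicit construction and a complete enumeration of $\supp_r(v,G)$, the argument is incomplete. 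This is not a minor bookkeeping issue: it is the entire content of the proof.

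The paper sidesteps this problem by a different choice of simplicial vertex. Instead of making $N(v)$ a large clique, it makes $v$ a leaf: $v=v_1$ has the single neighbour $v_2$, which begins a path $v_1,\ldots,v_r$, and only $v_r$ is attached (via a clique) to the branching set $W=\bigsqcup_i W_i$ with $|W_i|=d_i$. Then every $r$-support of $v_1$ must be the path $\{v_2,\ldots,v_r\}$ together with exactly one $x\in W$, so $\supp_r(v_1,\tilde G)=\{\{v_2,\ldots,v_r,x\}:x\in W\}$ with no parasites at all. The tails are taken to be paths of length $(r+2)(k_i-1)+1$, and extra edges from $W$ into the tails are added so that deleting $N[S_x]$ leaves precisely a path $P_{(r+2)(k_i-1)-1}$, whose $r$-independence complex is $\mathbb{S}^{r(k_i-1)-1}$ by \Cref{prop:higher_ind_path}. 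Theorem~\ref{thm:main theorem for support} then gives the desired wedge in one stroke. The key idea you are missing is this degree-one simplicial vertex with a path of length $r-1$ before the branching, which forces the support set to have exactly the desired cardinality $\sum_i d_i$.
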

	Before proving \Cref{thm:reverseconstructioninside}, we illustrate the construction of desired graph  by an example. Fix $r=2,~ (d_1, d_2) = (1, 2)$ and   $(k_1,k_2)=(1,2)$. For $n \geq 1$, let $P_n$ denote the path graph on $n$ vertices. The $r$-independence complexes of path graphs have been computed by Paolini and Salvetti. They proved the following.
	
	\begin{proposition}[{\cite[Proposition 3.7]{PS18}}]\label{prop:higher_ind_path} For $r\geq 1$, we have
		\[\ind_{r}(P_n) \cong
		\begin{cases} 
		\mathbb{S}^{rk-1}, & \mathrm{if\ } n = (r+2)k \mathrm{\  or\ } n=(r+2)k-1;\\
		\{\mathrm{point}\}, & \mathrm{otherwise}.
		\end{cases}
		\]
	\end{proposition}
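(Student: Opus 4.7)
The plan is to argue by induction on $n$, using \Cref{thm:main theorem for support} applied to the endpoint vertex $v_1$ of $P_n$. Since $v_1$ is simplicial with $N(v_1)=\{v_2\}$, the hypotheses of \Cref{lemma:eachsupportconnected} hold automatically. The key structural observation is that when $n\geq r+1$ the set $\supp_r(v_1, P_n)$ contains a unique element, namely $S=\{v_2,\dots,v_{r+1}\}$, because every connected subgraph of a path that contains $v_1$ must be an initial segment. Hence \Cref{thm:main theorem for support} gives the recursion
\[
\ind_r(P_n) \simeq \Sigma^r\bigl(\ind_r(P_n - N[S])\bigr).
\]

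I would next compute $N[S]=\{v_1,v_2,\dots,v_{r+2}\}$ (or all of $V(P_n)$ when $n\le r+2$), so that $P_n - N[S] \cong P_{n-r-2}$ for $n\ge r+3$, while $P_n - N[S]$ is empty for $n\in\{r+1,r+2\}$. This yields the clean recursion $\ind_r(P_n)\simeq \Sigma^r(\ind_r(P_{n-r-2}))$ for $n\ge r+3$, together with $\ind_r(P_n)\simeq \Sigma^r(\emptyset)\simeq \mathbb{S}^{r-1}$ for $n\in\{r+1,r+2\}$; this matches the claim at $k=1$ since $r+1=(r+2)\cdot 1-1$ and $r+2=(r+2)\cdot 1$. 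For $n\le r$, the whole vertex set $V(P_n)$ is itself $r$-independent, so $\ind_r(P_n)$ is the full $(n-1)$-simplex and hence contractible, matching the ``otherwise'' clause.

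For the inductive step, note that the map $n\mapsto n-(r+2)$ preserves the residue class of $n$ modulo $r+2$. If $n=(r+2)k$ or $n=(r+2)k-1$ with $k\ge 2$, then $n-r-2$ equals $(r+2)(k-1)$ or $(r+2)(k-1)-1$ respectively, and by the inductive hypothesis $\ind_r(P_{n-r-2})\simeq \mathbb{S}^{r(k-1)-1}$; applying $\Sigma^r$ gives $\mathbb{S}^{rk-1}$, as desired. Otherwise $n-r-2$ falls into the ``otherwise'' case, so $\ind_r(P_{n-r-2})$ is contractible, and the $r$-fold suspension of a contractible space is contractible as well.

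The only delicate point, and the step I would verify with most care, is the convention for the $r$-independence complex of the empty graph and the resulting identification $\Sigma^r(\emptyset)\simeq \mathbb{S}^{r-1}$ (via the join identity \eqref{remark:join of space with spheres is suspension} together with $\emptyset\ast Y \simeq Y$). Once this convention is pinned down the boundary cases $n=r+1$ and $n=r+2$ feed correctly into the inductive pattern, and everything else is routine bookkeeping on residues modulo $r+2$.
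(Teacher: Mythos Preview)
The paper does not supply its own proof of this proposition; it is quoted from \cite[Proposition~3.7]{PS18} and used as a black box. Your argument is correct and in fact recovers the result using the machinery developed in this very paper: since the endpoint $v_1$ of $P_n$ is simplicial, \Cref{lemma:eachsupportconnected} and \Cref{thm:main theorem for support} apply, and the unique $r$-support $\{v_2,\dots,v_{r+1}\}$ yields the recursion $\ind_r(P_n)\simeq\Sigma^r(\ind_r(P_{n-r-2}))$ for $n\ge r+1$, from which the stated homotopy types follow by induction exactly as you describe. The convention $\Sigma^r(\emptyset)\simeq\mathbb{S}^{r-1}$ that you flag is precisely the one adopted in the paper (see the remark after \eqref{remark:join of space with spheres is suspension} and the worked example preceding \Cref{fig:reverse construction}), so the base cases $n=r+1$ and $n=r+2$ go through without issue.
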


	Let $G$ be the graph given in \Cref{fig:reverse construction}. Clearly, $G$ is a chordal graph and $v_1$ is a simplicial vertex. Here, $\supp_2(v,G)= \{\{v_2,a_1\}, \{v_2,b_1\},\{v_2,b_2\}\}$ and $G-N[\{v_2,a_1\}]= \emptyset$, $G-N[\{v_2,b_1\}] \cong P_4 \cong G-N[\{v_2,b_2\}]$. Thus, using \Cref{thm:main theorem for support} and \Cref{prop:higher_ind_path}, we get the following.
	\begin{equation*}
	\begin{split}
	\ind_2(G) & \simeq \Sigma^2 (\ind_2(G-N[\{v_2,a_1\}])) \vee \Sigma^2 (\ind_2(G-N[\{v_2,b_1\}])) \vee \\ & \hspace*{0.7cm}\Sigma^2 (\ind_2(G-N[\{v_2,b_2\}])) \\
	& \simeq \Sigma^2 (\emptyset) \vee \Sigma^2 (\ind_2(P_3)) \vee \Sigma^2 (\ind_2(P_3)) \\
	& \simeq \mathbb{S}^1\vee \mathbb{S}^3\vee \mathbb{S}^3. \\
	\end{split}
	\end{equation*}
	\begin{figure}[H]
		\centering
		\begin{tikzpicture}
		[scale=1.00, vertices/.style={draw, fill=black, circle, inner sep=1.5pt}]
		\node[vertices, label=above:{$v_1$}] (v1) at (0,0)  {};
		\node[vertices, label=above:{$v_2$}] (v2) at (2,0)  {};
		\node[vertices, label=below:{$b_1$}] (b1) at (5,0)  {};
		\node[vertices, label=above:{$a_1$}] (a1) at (4.5,3)  {};
		\node[vertices, label=below:{$b_2$}] (b2) at (3.5,-3)  {};
		
		\node[vertices, label=above:{$b_1^1$}] (b11) at (7,0)  {};
		\node[vertices, label=above:{$b_1^2$}] (b12) at (9,0)  {};
		\node[vertices, label=above:{$b_1^3$}] (b13) at (11,0)  {};
		\node[vertices, label=above:{$b_1^4$}] (b14) at (13,0)  {};
		
		\node[vertices, label=below:{$b_2^1$}] (b21) at (7,-3)  {};
		\node[vertices, label=below:{$b_2^2$}] (b22) at (8.5,-3)  {};
		\node[vertices, label=below:{$b_2^3$}] (b23) at (11,-3)  {};
		\node[vertices, label=below:{$b_2^4$}] (b24) at (12.5,-3)  {};

		
		\foreach \to/\from in {v1/v2, v2/b1, v2/a1, v2/b2, b1/b2, a1/b2, a1/b1, b1/b11, b11/b12, b12/b13, b13/b14, b2/b21, b21/b23, b23/b24, a1/b11, a1/b12, a1/b13, a1/b14, a1/b21, a1/b22, a1/b23, a1/b24,  b2/b11, b2/b12, b2/b13, b2/b14, b1/b21,b1/b22, b1/b23, b1/b24}
		\draw [-] (\to)--(\from);
		\end{tikzpicture}\caption{}\label{fig:reverse construction}
	\end{figure}
	
	\begin{proof}[Proof of \Cref{thm:reverseconstructioninside}] Let $P_{r}$ be a path graph on vertex set $\{v_1, v_2, \ldots, v_{r}\}$. For each $i \in \{1,\dots,n\}$, let $W_i$ be a set of cardinality $d_i$. Let $W= \bigsqcup \limits_{i=1}^n W_i$ and $K_{W+1}$ be the complete graph on vertex set $W \sqcup \{v_{r}\}$. For each $x \in W_i$, let $P_x^i$ be a path graph   on $(r+2)(k_i-1)+1$ vertices with $x$ as an end vertex. Here, $V(P_x^i \cap (P_r \cup K_{W+1})) = \{x\}$ for each $x \in W$ and $V(P_x^i \cap P_y^j)= \emptyset$ whenever $x\neq y$. Let 
		$$
		G = P_{r} \cup K_{W+1} \bigcup\limits_{i=1}^{n} (\bigcup\limits_{x \in W_i} P_x^i).
		$$
		
		Let $\tilde{G}$ be the graph with vertex set $V(G)$ and $E(\tilde{G})= E(G) \sqcup \bigcup\limits_{1 \leq i, j \leq n} \{(a,b) : a \in W_j,~ b \in P_x^{i}, \text{ where either } i \neq j \text{ or }  b\neq x\}$.
		Clearly, $v_1$ is a simplicial vertex in $\tilde{G}$ as $N_{\tilde{G}}(v_1) = \{v_2\}$ and $\supp_r(v_1, \tilde{G}) = \{ \{v_2, \ldots, v_{r}, x\} \ : \  x \in W \}$. For each $x \in W$, let $S_x = \{v_1, \ldots, v_r, x\}$. 
		Then by Theorem \ref{thm:main theorem for support}, 
		$\ind_r(\tilde{G}) \simeq \bigvee\limits_{x \in W} \Sigma^r (\ind_r(\tilde{G} - N[S_x]))$. Observe that for each
		$x \in W_i$,  $\tilde{G}- N[S_{x}]$ is isomorphic to a path on $(r+2)(k_i-1)-1$ vertices  and therefore $\ind_r(\tilde{G}-N[S_{x}])\simeq \mathbb{S}^{r(k_i-1)-1}$ by \Cref{prop:higher_ind_path}. Hence, $\ind_r(\tilde{G}) \simeq \bigvee\limits_{\substack{1 \leq i \leq n \\ x \in W_i}} \Sigma^r (\mathbb{S}^{r(k_i-1)-1})= \bigvee\limits_{\substack{1 \leq i \leq n \\ x \in W_i}} \mathbb{S}^{rk_i-1} = \bigvee\limits_{i=1}^n \vee_{d_i} \mathbb{S}^{rk_i-1}$.
	\end{proof}
	
	\section*{Acknowledgements}
PD and AS are partially supported by a grant from Infosys Foundation. PD is also supported by the
MATRICS grant MTR/2017/000239.

	
\bibliographystyle{plain}

\end{document}